\documentclass{amsart} %leqno is the option to put formula numbers on the left side
\usepackage{amssymb} 
\usepackage{newtxtext}
\usepackage{xpatch}
\usepackage{txfonts}
\usepackage[all]{xy}
\usepackage{amscd}
\usepackage{amsmath}
\usepackage{mathrsfs}
\usepackage{mathtools}
\usepackage[dvipdfmx]{graphicx}
\mathtoolsset{showonlyrefs=true}

%
%
%%%%%%%%% Theorem-like environment %%%%%%%%%%%
%
\theoremstyle{normal} %text of this environment is typesetted in italics
\newtheorem{Thm}{\indent\sc Theorem}[section]
\newtheorem{Lemma}[Thm]{\indent\sc Lemma}
\newtheorem{Coro}[Thm]{\indent\sc Corollary}
\newtheorem{Prop}[Thm]{\indent\sc Proposition}

\theoremstyle{definition} %text of this environment is typesetted in roman letters
\newtheorem{Def}[Thm]{\indent\sc Definition}
\newtheorem{Rem}[Thm]{\indent\sc Remark}
\newtheorem{Ex}[Thm]{\indent\sc Example}
\newtheorem{Nota}[Thm]{\indent\sc Notation}

\newtheorem{Const}[Thm]{Construction}
%\newtheorem*{acknowledgement}{\textit{Acknowledgement}}
%
%If a theorem-like environment should not be numbered,
%add * after \newtheorem, and delete the counter option such as [theorem].

%
%%%%% Proof %%%%%

%The following commands are available in the proof environment:
%\begin{proof}
%\end{proof}
%The end of a proof is marked with a square.
\newcommand{\Spec}{\mathrm{Spec}\ }
\newcommand{\CC}{\mathbb{C}}
%\mathtoolsset{showonlyrefs=true}
%\def\qed{\hfill $\Box$}
%%%%%%%%%%%%%%%%%%%%%%%%%%%%%%%%%%%%%%%%%

\begin{document}
\title{On the configuration of the singular fibers of jet schemes of rational double points}

\author[Y. Koreeda]{Yoshimune Koreeda}

%%%%%%%%%%%%%%% footnote %%%%%%%%%%%%%%%%
\subjclass[2010]{ %2010 MSC numbers
Primary 14J17
}
%In case \subjclass[2010] command is not effective
%(or the version of amsart.cls is old), write as follows instead:
%\renewcommand{\thefootnote}{\fnsymbol{footnote}}
%\footnote[0]{2010\textit{ Mathematics Subject Classification}.
%Primary 00; Secondary 00.}
%
\keywords{Jet scheme; rational double point singularities}
%%%%%%%%%%%% Authors' addresses %%%%%%%%%%%%%
\address{% First Author
Department of Mathematics, Graduate School of Science \endgraf
Hiroshima University \endgraf
1-3-1 Kagamiyama, Higashi-Hiroshima, 739-8526 \endgraf
Japan
}
\email{d193613@hiroshima-u.ac.jp}

%%%%%%%%%%%%%%%%%%%%%%%%%%%%%%%%%%%%%%%%%

	\maketitle

	\begin{abstract}
To each variety $X$ and a nonnegative integer $m$,
there is a space $X_m$ over $X$,
called the jet scheme of $X$ of order $m$,
parametrizing $m$-th jets on $X$.
Its fiber over a singular point of $X$ is called a singular fiber.
For a surface with a rational double point,
Mourtada(\cite{M2}) gave a one-to-one correspondence between the irreducible components of the singular fiber of $X_m$ and the exceptional curves of the minimal resolution of $X$ for $m \gg 0$.

In this paper,
for a surface $X$ over $\CC$ with a singularity of $A_n$ or $D_4$-type,
we study the intersections of irreducible components of the singular fiber and construct a graph using this information.
The vertices of the graph correspond to irreducible components of the singular fiber and two vertices are connected when the intersection of the corresponding components is maximal for the inclusion relation.
In the case of $A_n$ or $D_4$-type singularity,
we show that this graph is isomorphic to the resolution graph for $m \gg 0$.
	\end{abstract}
	\section*{Introduction}

Let $X$ be a surface over $\CC$ and $\mathrm{Sing}\ X$ the singular locus of $X$.
The notions of a jet scheme and an arc scheme were introduced by J. F. Nash in 1968 in a preprint,
later published in 1995(\cite{Na}).
Roughly speaking,
an $m$-th jet of $X$ is an infinitesimal map of order $m$ from a germ of a curve to $X$,
and an arc of $X$ is an infinitesimal map of order infinity from a germ of a curve to $X$.
The $m$-th order jet scheme of $X$,
denoted by $X_m$,
is a scheme parametrizing $m$-th jets,
and the arc scheme of $X$,
denoted by $X_{\infty}$,
is a scheme parametrizing arcs.
For nonnegative integers $m > m'$,
there is a map $\pi_{m,m'} : X_m \rightarrow X_{m'}$,
called the truncation morphism.
Then an arc scheme can be obtained as the projective limit of jet schemes with respect to the truncation morphisms.

The $0$-th jet scheme $X_0$ is identified with $X$,
and hence we have the morphism $\pi_{m,0}:X_m \rightarrow X$.
We call $X_m^0 = \pi_{m,0}^{-1}(\mathrm{Sing}\ X)$ the singular fiber.
It is hoped that jet schemes and arc schemes,
and in particular the singular fibers,
reflect the property of singular points.
For arc schemes,
there is a famous problem called the Nash problem.
The Nash problem asks the relation between the ``Nash components" and the ``essential divisors" of a resolution of singularities.
There is a natural injective map,
called the Nash map,
from the set of Nash components to the set of essential divisors.
The problem is whether the Nash map is surjective.
This problems was affirmatively solved for $A_n$-type singular surfaces in \cite{Na},
for the remaining rational double points in \cite{{Pe}, {Pl}, {PS}},
for rational surface singularities in \cite{{Re1}, {Re2}},
and for arbitrary surface singularities in \cite{BP}.

On the other hand,
the study of relations between the singular fibers of \emph{jet} schemes and the exceptional divisors of a resolution of singularities for surfaces was only recently started,
in a series of papers by Mourtada(\cite{M1}, \cite{M2}) and Mourtada-Pl\'{e}nat(\cite{MP}).
For a general surface $X$,
the relation between the irreducible components of the singular fiber $X_m^0$ and the exceptional divisors of the minimal resolution of singularities is not simple.
For example,
the number of irreducible components of $X_m^0$ and the number of exceptional divisors are not necessarily equal even for $m \gg 0$.
However,
for rational double point singularities,
Mourtada(\cite{M1}, \cite{M2}) gave a one-to-one correspondence between the irreducible components of the singular fiber of $X_m$ for a fixed $m \gg 0$ and the exceptional curves of the minimal resolution of $X$.
Moreover,
in \cite{MP},
Mourtada and Pl\'{e}nat define ``essential components" and ``minimal embedded toric resolutions".
An essential component is an irreducible component of $X_m^0$ satisfying certain conditions,
where $m$ is allowed to vary.
They gave a one-to-one correspondence between the set of the essential components and the set of the divisors which appear on every ``minimal embedded toric resolution" for rational double point singularities except for the $E_8$-type singular surface.
Further,
they show how to obtain the minimal embedded toric resolution from the information of essential components.

In this paper,
we consider the following quastion:
What one can get from the correspondence between irreducible components of $X_m^0$ for a \emph{fixed} $m \gg 0$ and exceptional curves of the minimal resolution of singularity?
In the case $X$ is $A_n$ or $D_4$-type singular surface over $\CC$,
we study how the irreducible components of $X_m^0$ intersect with each other for $m \gg 0$,
and construct a graph using this information.
This graph will be isomorphic to the resolution graph.

Let us explain how to construct the graph.
We expect that if two irreducible components correspond to distant vertices on the resolution graph,
then their intersection is ``small" in some sense.
A naive expectation would be that the intersection has lower dimensions,
but this is not true.
We explicitly calculate the intersections for an $A_n$-type singular surface and see that the dimensions of the intersections of two distinct irreducible components are independent of the choice of irreducible components for $m \gg 0$.
Still,
we can determine the adjacency as follows.
Let $Z_m^1,...,Z_m^n$ be the irreducible components of the singular fiber $X_m^0$.
	\begin{Const}\label{ConstructiontheGraph}
Let $V = \{ Z_m^1,...,Z_m^n \}$,
and let $E \subseteq \{ Z_m^i \cap Z_m^j \ |\  i,j \in \{1,...,n\}\ \mathrm{with}\  i \neq j \}$ be the set of the maximal elements for the inclusion relation.
Then we construct a graph $\Gamma$ as the pair $(V, E)$,
i.e.
the vertices of $\Gamma$ are elements of $V$,
and there is given an edge between $Z_m^i$ and $Z_m^j$ if and only if $Z_m^i \cap Z_m^j \in E$.
 	\end{Const}
To study the intersections,
we use the description of the irreducible components of the singular fiber by Mourtada.
For $A_n$-type singular surfaces,
Mourtada(\cite{M1}) gave generators of the defining ideals of the irreducible components of $X_m^0$.
Hence it is possible to obtain the irreducible decompositions of $Z_m^i \cap Z_m^j$ with $i \neq j$,
and this enables us to describe the graph $\Gamma$ in Construction \ref{ConstructiontheGraph} for $A_n$-type singular surfaces.
This graph is isomorphic to the resolution graph of an $A_n$-type singular surface.
For a $D_4$-type singular surface,
Mourtada(\cite{M2}) describes the irreducible components of the singular fiber as the closures of certain locally closed sets.
Thus we do not know the generators of the defining ideals of the irreducible components of $X_m^0$.
Still,
we can find certain elements of the defining ideals of $Z_m^i \cap Z_m^j$,
which allow us to study the inclusion relations.
In this way we determine the graph $\Gamma$ in Construction \ref{ConstructiontheGraph} for a $D_4$-type singular surface.
This graph is also isomorphic to the resolution graph of a $D_4$-type singular surface.
We expect that,
for rational double point singularities,
the graphs defined in the same way are isomorphic to the resolution graphs.

The organization of this paper is as follows.
In section 2,
we fix some notations on jet schemes.
In section 3,
we recall the description of the defining ideals of irreducible components of the singular fiber of an $A_n$-type singular surface by Mourtada(\cite{M1}).
Then we study the intersections of irreducible components,
in particular the irreducible decompositions and the dimensions of the intersections.
In section 4,
using the description of irreducible components of the singular fiber of a $D_4$-type singular surface by Mourtada(\cite{M2}),
we determine the maximal elements of their intersections.

\textit{Acknowledgement.}
The author would like to thank Nobuyoshi Takahashi for valuable advice.

	\section{Jet schemes}

In this section, we recall the definition of jet schemes,
describe them explicitly and fix some notations.
We are interested in a neighborhood of a singularity,
so we consider an affine scheme of finite type over an algebraically closed field $k$ as a target space.

Let $X$ be an affine scheme of finite type over $k$ and let $m$ be a nonnegative integer.

	\begin{Prop} (\cite[Proposition 2.2]{Is})
Let ${\mathbf{Sch}}/k$ denote the category of schemes over $\Spec k$ and ${\mathbf{Set}}$ the category of sets.
We define the functor
		\begin{center}
	$F_{m}^{X} : {\mathbf{Sch}}/k \rightarrow {\mathbf{Set}}$
		\end{center}
as follows:
For $Z \in {\mathbf{Sch}}/k$,
		\begin{center}
	$F_m^X (Z) := {\rm Hom}_k(Z \times_{\Spec k} \Spec k[t]/\langle t^{m+1}\rangle,X)$.
		\end{center}
Then $F_m^X$ is represented by an affine scheme $X_m$ of finite type over $k$.
This scheme $X_m$ is called the $m$-th jet scheme of $X$.
	\end{Prop}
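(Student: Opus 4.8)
The plan is to reduce representability to a statement about affine test schemes, where it becomes the problem of writing $X_m$ down by explicit equations, and then to globalize by a sheaf argument. First I would observe that both $F_m^X$ and the functor of points $\mathrm{Hom}_k(-,W)$ of any $k$-scheme $W$ are sheaves for the Zariski topology. For the latter this is the standard fact that morphisms into a fixed scheme glue along an open cover. For $F_m^X$ it is the same fact applied to the target $X$: if $\{U_i\}$ is an open cover of $Z$, then, because $\Spec k[t]/\langle t^{m+1}\rangle$ is a one-point space, the products $\{U_i \times_{\Spec k} \Spec k[t]/\langle t^{m+1}\rangle\}$ form an open cover of $Z \times_{\Spec k} \Spec k[t]/\langle t^{m+1}\rangle$ with intersections $(U_i \cap U_j)\times_{\Spec k}\Spec k[t]/\langle t^{m+1}\rangle$, so a jet over $Z$ is the same datum as a compatible family of jets over the $U_i$. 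Since a Zariski sheaf is determined by its restriction to affine schemes, it suffices to construct an affine scheme $X_m$ together with a bijection $\mathrm{Hom}_{k\text{-alg}}(\mathcal{O}(X_m),B) \cong F_m^X(\Spec B)$ natural in the $k$-algebra $B$; this isomorphism will then extend uniquely to all of ${\mathbf{Sch}}/k$.

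Writing $X = \Spec A$, the identity $\Spec B \times_{\Spec k}\Spec k[t]/\langle t^{m+1}\rangle = \Spec(B[t]/\langle t^{m+1}\rangle)$ and the adjunction between $\Spec$ and global sections give $F_m^X(\Spec B) = \mathrm{Hom}_{k\text{-alg}}(A, B[t]/\langle t^{m+1}\rangle)$. I would then fix a presentation $A = k[x_1,\dots,x_N]/\langle f_1,\dots,f_r\rangle$. A homomorphism $\varphi\colon A \to B[t]/\langle t^{m+1}\rangle$ amounts to a choice of images $\varphi(x_i) = \sum_{j=0}^m b_i^{(j)}t^j$ with $b_i^{(j)}\in B$ subject to $f_\ell(\varphi(x_1),\dots,\varphi(x_N)) = 0$ for all $\ell$. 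Introducing indeterminates $x_i^{(j)}$ for $1\le i\le N$ and $0\le j\le m$, and expanding $f_\ell(\sum_j x_1^{(j)}t^j,\dots,\sum_j x_N^{(j)}t^j)$ in powers of $t$ modulo $t^{m+1}$, I obtain universal polynomials $F_\ell^{(p)} \in k[x_i^{(j)}]$ determined by $f_\ell(\sum_j x_1^{(j)}t^j,\dots) \equiv \sum_{p=0}^m F_\ell^{(p)}\, t^p \pmod{t^{m+1}}$. Setting $X_m := \Spec\big(k[x_i^{(j)}]/\langle F_\ell^{(p)} : 1\le\ell\le r,\ 0\le p\le m\rangle\big)$ yields an affine scheme of finite type over $k$, since there are only $(m+1)N$ variables and $(m+1)r$ relations. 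The condition on $\varphi$ is precisely that $F_\ell^{(p)}\big((b_i^{(j)})\big) = 0$ for all $\ell,p$, so $\varphi \mapsto (b_i^{(j)})$ gives a bijection $\mathrm{Hom}_{k\text{-alg}}(A, B[t]/\langle t^{m+1}\rangle) \cong \mathrm{Hom}_{k\text{-alg}}(\mathcal{O}(X_m), B)$.

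The step needing the most care is the naturality of this bijection, which is exactly the content of representability. Because the coefficients of the $F_\ell^{(p)}$ lie in $k$ and do not depend on $B$, forming the truncated expansion of $\varphi(x_i)$ commutes with any $k$-algebra map $B \to B'$; this compatibility with base change is what upgrades the pointwise bijection to a natural isomorphism of functors on affine schemes, and hence, by Yoneda, identifies the restriction of $F_m^X$ to affine schemes with $\mathrm{Hom}_k(-,X_m)$. I would then invoke the sheaf reduction of the first paragraph to extend this to an isomorphism $F_m^X \cong \mathrm{Hom}_k(-, X_m)$ on all of ${\mathbf{Sch}}/k$, proving that $X_m$ represents $F_m^X$. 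The apparent dependence of $X_m$ on the chosen presentation of $A$ is harmless: once representability is established, $X_m$ is determined up to unique isomorphism by the universal property.
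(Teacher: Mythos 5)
Your proposal is correct and takes essentially the same approach as the source the paper relies on: the paper gives no proof of its own, citing \cite[Proposition 2.2]{Is}, and the explicit description it records immediately after the statement, $X_m = \Spec (k[x_1^{(0)},\dots,x_e^{(m)}]/\langle f_1^{(0)},\dots,f_r^{(m)}\rangle)$, is exactly the scheme you construct from a presentation of the coordinate ring. Your two reductions --- identifying $F_m^X(\Spec B)$ with $\mathrm{Hom}_{k\text{-alg}}(A, B[t]/\langle t^{m+1}\rangle)$ and then passing from affine test schemes to all of ${\mathbf{Sch}}/k$ via the Zariski sheaf property (valid since $\Spec k[t]/\langle t^{m+1}\rangle \rightarrow \Spec k$ is a universal homeomorphism, so the thickened opens do cover) --- are the standard ingredients of Ishii's argument, and your naturality check via base-change invariance of the universal coefficient polynomials closes the Yoneda step properly.
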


For an affine scheme of finite type over $k$,
we can describe its $m$-th jet scheme as follows.

Let $X$ be an affine scheme embedded in $\mathbb{A}^e$.
Then its affine coordinate ring $\Gamma(X, \mathscr{O}_X)$ can be written in the form $k[x_1,...,x_e]/\langle f_1,...,f_r\rangle$.
We introduce some notations.
	\begin{Nota}
Let $\mathbf{x}_i := x_i^{(0)} + x_i^{(1)}t + \cdots + x_i^{(m)}t^m \in k[x_1^{(0)},...,x_1^{(m)},...,x_e^{(0)},...,x_e^{(m)},t]/ \langle t^{m+1} \rangle$ $(i = 1,...,e)$.
For a polynomial $f \in k[x_1,...,x_e]$,
we expand $f(\mathbf{x}_1,...,\mathbf{x}_e)$ as
	\begin{center}
$f \left( \mathbf{x}_1,...,\mathbf{x}_e \right) = f^{(0)} + f^{(1)}t + \cdots + f^{(m)}t^m$ 
	\end{center}
in $k[x_1^{(0)},...,x_1^{(m)},...,x_e^{(0)},...,x_e^{(m)},t]/ \langle t^{m+1} \rangle$,
where $f^{(j)} \in k[x_1^{(0)},...,x_1^{(m)},...,x_e^{(0)},...,x_e^{(m)}]$.
Then the $m$-th jet scheme $X_m$,
which represents the functor $F_m^X$,
is
	\begin{center}
$X_m = \Spec (k[x_1^{(0)},...,x_1^{(m)},...,x_e^{(0)},...,x_e^{(m)}]/\langle f_1^{(0)},...,f_1^{(m)},...,f_r^{(0)},...,f_r^{(m)}\rangle)$.
	\end{center}
	\end{Nota}
	\begin{Rem} \label{jet polynomial of order m can be calculate as more than the order m variables}
Let $g \in k[x_1,...,x_e]$ and $m \in \mathbb{Z}_{\geq 0}$.
The polynomials $g^{(j)}$ are independent of $m$ as long as $m \geq j$.
In particular,
if we want to calculate the polynomial $g^{(j)}$,
we have only to calculate the polynomial $\displaystyle g \left( \sum_{k = 0}^{j}x_1^{(k)}t^k, ..., \sum_{k=0}^{j}x_e^{(k)}t^k \right)$.
	\end{Rem}
	\begin{Ex}
(1)
Suppose $X = \mathbb{A}^e$.
Then the $m$-th jet scheme $X_m$ is $\mathbb{A}^{e(m+1)}$.
\\
(2)
We calculate $X_2$ for $X = \Spec (\CC[x,y,z]/\langle xy - z^2 \rangle)$.
Let $f = xy - z^2$,
$\mathbf{x} = x_{0} + x_{1}t + x_{2}t^2$,
$\mathbf{y} = y_{0} + y_{1}t + y_{2}t^2$ and $\mathbf{z} = z_{0} + z_{1}t + z_{2}t^2$.
Then
	\begin{align}
f(\mathbf{x},\mathbf{y},\mathbf{z}) =& (x_{0}y_{0} - z_{0}^{2}) + (x_{1}y_{0} + x_{0}y_{1} - 2z_{0}z_{1})t\\
 &  + (x_{2}y_{0} + x_{1}y_{1} + x_{0}y_{2} - z_{1}^{2} - 2z_{0}z_{2})t^2 + Ft^3
	\end{align}
where $F \in \CC[x_{0},x_{1},x_{2},y_{0},y_{1},y_{2},z_{0},z_{1},z_{2},t]$.
We set
	\begin{align}
f^{(0)} =& x_{0}y_{0} - z_{0}^{2},\\[-5pt]
f^{(1)} =& x_{1}y_{0} + x_{0}y_{1} - 2z_{0}z_{1},\\[-5pt]
f^{(2)} =& x_{2}y_{0} + x_{1}y_{1} + x_{0}y_{2} - z_{1}^{2} - 2z_{0}z_{2}.
	\end{align}
The second jet scheme of $X$ is
	\begin{center}
$X_2 = \Spec (\CC[x_{0},x_{1},x_{2},y_{0},y_{1},y_{2},z_{0},z_{1},z_{2}]/\langle f^{(0)},f^{(1)},f^{(2)} \rangle)$.
	\end{center}
	\end{Ex}

Next we consider closed points of $X_m$.
In the above situation,
the scheme $X_m$ is a closed subvariety of $(\mathbb{A}^e)_m = \mathbb{A}^{e(m+1)}$,
so we regard the closed points of $X_m$ as an element of $k^{e(m+1)}$.
	\begin{Nota}
Let $\gamma = (a_1^{(0)},...,a_1^{(m)},...,a_e^{(0)},...,a_e^{(m)}) \in \mathbb{A}^{e(m+1)}$ be a closed point.
Then we also denote $\displaystyle \gamma = \left(\sum_{i=0}^{m}a_{1}^{(i)}t^i,...,\sum_{i=0}^m a_{e}^{(i)}t^i \right)$ using the variable $t$.

For $g \in k[x_1,...,x_e]$,
we regard $g$ as a morphism $\mathbb{A}^e \rightarrow \mathbb{A}^1$,
and then the composition $g \circ \gamma$ is given by the substitution as $\displaystyle g\left(\sum_{i=0}^{m}a_{1}^{(i)}t^i,...,\sum_{i=0}^m a_{e}^{(i)}t^i \right)$.
Then we define $\mathrm{ord}_{\gamma}(g)$ as the $t$-order of $g \circ \gamma$ in $k[t]$.
	\end{Nota}
	\begin{Rem}
A closed point $\alpha \in (\mathbb{A}^e)_m = \mathbb{A}^{e(m+1)}$ belongs to $X_m$ if and only if $\mathrm{ord}_{\alpha}(f_i) \geq m+1$
for $i = 1,...,r$.
	\end{Rem}

Finally we see how $X_m$ and $X_{m'}$ are related for $m,m' \in \mathbb{Z}_{\geq 0}$ with $m \geq m'$.
Let $Z$ be any scheme over $\Spec k$.
The ring homomorphism
	\begin{center}
		$k[t]/\langle t^{m+1}\rangle \rightarrow k[t]/\langle t^{m'+1}\rangle\ ;\ \sum_{i = 0}^m \alpha_i t^i \mapsto \sum_{i=0}^{m'} \alpha_i t^i$
	\end{center}
induces the morphism of affine schemes
	\begin{center}
		$\Spec k[t]/\langle t^{m'+1}\rangle \rightarrow \Spec k[t]/\langle t^{m+1}\rangle$.
	\end{center}
For any $k$-scheme $Z$,
this induces a morphism
	\begin{center}
		$\varphi(Z) : Z \times_{\Spec k} \Spec k[t]/\langle t^{m'+1}\rangle \rightarrow Z \times_{\Spec k} \Spec k[t]/\langle t^{m+1}\rangle$.
	\end{center}
Then the collection of the morphisms $\varphi(Z)$ induces a natural transformation $F_m^X \rightarrow F_{m'}^X$ given by
	\begin{center}
		$F_m^X(Z) \rightarrow F_{m'}^X(Z)\ ;\ g \mapsto g \circ \varphi(Z)$.
	\end{center}
Since $F_m^X$ and $F_{m'}^X$ are represented by $X_m$ and $X_{m'}$,
we obtain a morphism
	\begin{center}
		$\pi_{m,m'} : X_m \rightarrow X_{m'}$.
	\end{center}
	\begin{Def}
The morphism $\pi_{m,m'}$ is called the \emph{truncation morphism}.
In particular,
for $m' = 0$,
we denote $\pi_{m,0}$ by $\pi_m$.
	\end{Def}

Let us look at the truncation morphism in the case $X = \mathbb{A}^e$.
Let $m$ and $m'$ be nonnegative integers with $m > m'$.
A closed point of $X_m$ can be written as $\mathbf{a} = (a_1^{(0)},...,a_1^{(m')},...,a_1^{(m)},...,$
$a_e^{(0)},...,a_e^{(m')},...,a_e^{(m)})$.
Then
	\begin{center}
$\pi_{m,m'}(\mathbf{a}) = (a_1^{(0)},...,a_1^{(m')},...,a_e^{(0)},...,a_e^{(m')})$.
	\end{center}
For a closed subscheme $X \subseteq \mathbb{A}^e$,
the truncation morphism for $X$ is the restriction of the truncation morphism for $\mathbb{A}^e$.

	\section{Intersections of irreducible components of the singular fiber of a jet scheme : $A_n$ case}

In this section,
we consider a surface $X$ over $\CC$ with an $A_n$-type singularity at the origin.
H. Mourtada studied irreducible components of the ``singular fiber"  of jet schemes of $X$.
His article(\cite{M1}) gives an explicit description of the defining ideals of the irreducible components.
We will first summarizes his arguments in a form convenient for us.

Let $X = \Spec \CC[x,y,z]/\langle xy-z^{n+1}\rangle$,
$m \in \mathbb{Z}_{\geq 0}$ and let $\pi_{m} : X_m \rightarrow X$ be the truncation morphism.
The surface $X$ is a toric surface which has an $A_n$-type singular point at $0(x = y = z = 0)$.
We are interested in the fiber $X_m^0 := \pi^{-1}_m(0)$,
which we call the singular fiber.
It is known that the number of irreducible components of $X_m^0$ is equal to the number of the exceptional curves of the minimal resolution of $X$ for $m \geq n$ (\cite[Theorem 3.1]{M1}).
Thus we assume $m \geq n$ from now on in this section.
We also note that if $I \subset \CC[x_0,..x_N,y_0,...,y_N,z_0,...,z_N]$ is an ideal for $N \in \mathbb{Z}_{\geq 0}$,
then for $N' \geq N$,
	\begin{center}
$I \cdot \CC[x_0,..x_{N'},y_0,...,y_{N'},z_0,...,z_{N'}] \cap \CC[x_0,..x_N,y_0,...,y_N,z_0,...,z_N] = I$.
	\end{center}
Hence we may regard the ideals appearing in the following as ideals in the ring $\CC[x_0,...,x_N,$ $y_0,...,y_N,z_0,...,z_N]$ for $N \gg 0$.

Now we fix some notations.
	\begin{Nota} \label{jet polynomial modulo coordinate}
Let $f := xy - z^{n+1}$ and let $p,q,r,m$ be nonnegative integers with $\mathrm{max}\{p,q,r\} \leq m$.
We define an ideal $L_{pqr}$ by
	\begin{center}
$L_{pqr} := \langle x_0,...,x_{p-1},y_0,...,y_{q-1},z_0,...,z_{r-1} \rangle$.
	\end{center}
For $j \leq m$,
we denote by $f_{pqr}^{(j)}$ the coefficient of $t^j$ in the expansion of $\displaystyle f(\sum_{i = p}^m x_it^i, \sum_{i=q}^m y_it^i, \sum_{i=r}^m z_it^i)$.

Moreover,
we set
	\begin{center}
$\Lambda_{pq}^{j} := \{ (l_1, l_2) \in \mathbb{Z}_{\geq 0}^2 \mid l_1 \geq p, l_2 \geq q\ \mathrm{and}\ l_1 + l_2 = j \}$
	\end{center}
and
	\[
\Lambda_{r}^j := \left \{ \left((i_1,...,i_l), (d_1,...,d_l)\right) \mid
		\begin{array}{l}
		\text{$l \geq 1, r \leq i_1 < i_2 < \cdots < i_l \leq j,\ d_1,...,d_l > 0,$}\\
		\text{$d_1 + \cdots + d_l = n+1,\ i_1d_1 + \cdots + i_ld_l = j$}
		\end{array}  \right \}.
	\]
	\end{Nota}
We obtain the following lemma by a direct calculation.
	\begin{Lemma} \label{An polynomial modulo relation}
For $p,q,r,j \in \mathbb{Z}_{\geq0}$ with $\mathrm{max} \{p,q,r \} \leq m$ and $j \leq m$,
we have
	\begin{center}
$f_{pqr}^{(j)} \equiv f^{(j)}\ \mathrm{mod}\ L_{pqr}$.
	\end{center}
In particular,
$f_{000}^{(j)} = f^{(j)}$.

Moreover,
we have
	\begin{center}
$\displaystyle f_{pqr}^{(j)} = \sum_{(l_1,l_2) \in \Lambda_{pq}^j} x_{l_1}y_{l_2} - \sum_{\left((i_1,...,i_l), (d_1,...,d_l) \right) \in \Lambda_{r}^j} \frac{(n+1)!}{d_1!\cdots d_l!} z_{i_1}^{d_1} \cdots z_{i_l}^{d_l}$,
	\end{center}
where the first (resp. second) term of the right hand side is $0$ if $\Lambda_{pq}^{j} = \emptyset$ (resp. $\Lambda_{r}^j = \emptyset$).
	\end{Lemma}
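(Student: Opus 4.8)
The plan is to prove both assertions by directly expanding
\[
f\!\left(\sum_{i=p}^{m} x_i t^i,\ \sum_{i=q}^{m} y_i t^i,\ \sum_{i=r}^{m} z_i t^i\right)
\]
and reading off the coefficient of $t^j$, treating the two summands $xy$ and $z^{n+1}$ of $f$ separately. I would obtain the congruence first, by a substitution argument, and then the explicit formula via the multinomial theorem.

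For the congruence, the key point is that passing to the quotient by $L_{pqr}$ kills exactly the low-order variables, so that
\[
\sum_{i=0}^{m} x_i t^i \equiv \sum_{i=p}^{m} x_i t^i,\quad \sum_{i=0}^{m} y_i t^i \equiv \sum_{i=q}^{m} y_i t^i,\quad \sum_{i=0}^{m} z_i t^i \equiv \sum_{i=r}^{m} z_i t^i \pmod{L_{pqr}}.
\]
Since $L_{pqr}$ is an ideal and polynomial substitution respects congruences modulo an ideal in each of its arguments, feeding these three congruences into $f$ shows that $f(\sum_{i=0}^m x_i t^i,\dots)$ and $f(\sum_{i=p}^m x_i t^i,\dots)$ agree coefficientwise modulo $L_{pqr}$. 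Because $f^{(j)} = f_{000}^{(j)}$ is by definition the coefficient of $t^j$ in the former, comparing the coefficients of $t^j$ gives $f_{pqr}^{(j)} \equiv f^{(j)} \bmod L_{pqr}$; the case $p=q=r=0$ is the stated equality $f_{000}^{(j)} = f^{(j)}$.

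For the explicit formula I would expand the two pieces of $f$. The product $(\sum_{i=p}^m x_i t^i)(\sum_{i=q}^m y_i t^i)$ equals $\sum_{l_1 \geq p,\, l_2 \geq q} x_{l_1} y_{l_2}\, t^{l_1 + l_2}$, whose $t^j$-coefficient is precisely $\sum_{(l_1,l_2) \in \Lambda_{pq}^j} x_{l_1} y_{l_2}$. For $(\sum_{i=r}^m z_i t^i)^{n+1}$ I would apply the multinomial theorem, writing a general term as $\tfrac{(n+1)!}{\prod_i \alpha_i!}\bigl(\prod_i z_i^{\alpha_i}\bigr) t^{\sum_i i\alpha_i}$ with $\sum_i \alpha_i = n+1$. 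Keeping only the indices $i$ with $\alpha_i > 0$, renaming them $i_1 < \cdots < i_l$ and setting $d_k = \alpha_{i_k}$, reindexes the $t^j$-part of this sum exactly by the data parametrized in $\Lambda_r^j$ (the factorials attached to the vanishing exponents being $1$), and yields the stated $z$-sum. Subtracting the two contributions gives the claimed formula for $f_{pqr}^{(j)}$.

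The lemma is asserted to follow by direct calculation, and indeed no real obstacle arises; the one bookkeeping point worth checking is that the constraint $i_l \leq j$ in the definition of $\Lambda_r^j$ is automatic, so that it neither discards nor introduces terms. This holds because each $d_k \geq 1$ forces $i_l \leq i_l d_l \leq \sum_k i_k d_k = j$, and since $j \leq m$ every index then lies in the permitted range $r \leq i_k \leq m$. With this observation the two expansions assemble into the asserted expression, completing the plan.
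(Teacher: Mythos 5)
Your proof is correct and takes exactly the route the paper intends: the paper gives no written argument for this lemma, stating only that it follows ``by a direct calculation,'' and your argument --- the substitution congruence modulo $L_{pqr}$ for the first claim, then the coefficient extraction from the product $\bigl(\sum_{i\geq p} x_i t^i\bigr)\bigl(\sum_{i\geq q} y_i t^i\bigr)$ and the multinomial expansion of $\bigl(\sum_{i\geq r} z_i t^i\bigr)^{n+1}$ reindexed by $\Lambda_{pq}^{j}$ and $\Lambda_{r}^{j}$ --- is precisely that calculation. Your closing bookkeeping check, that $i_l \leq i_l d_l \leq \sum_k i_k d_k = j \leq m$ makes the upper bound in $\Lambda_r^j$ (and likewise $l_1, l_2 \leq j \leq m$ in $\Lambda_{pq}^j$) automatic, is the one point where care is needed, and you handle it correctly.
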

	\begin{Coro}(\cite[Section 3]{M1}) \label{Properties of jet polynomials}
	\begin{itemize}
\item[(1)] If $p + q > j$,
then
	\begin{center}
$\displaystyle f^{(j)}_{pqr} = - \sum_{\left((i_1,...,i_l), (d_1,...,d_l) \right) \in \Lambda_{r}^j} \frac{(n+1)!}{d_1!\cdots d_l!} z_{i_1}^{d_1} \cdots z_{i_l}^{d_l}$.
	\end{center}
\item[(2)] If $r(n+1) > j$,
then
	\begin{center}
$\displaystyle f^{(j)}_{pqr} = \sum_{(l_1,l_2) \in \Lambda_{pq}^j} x_{l_1}y_{l_2}$.
	\end{center}
\item[(3)] If $p+q > j$ and $r(n+1) > j$,
then
	\begin{center}
$f^{(j)}_{pqr} = 0$.
	\end{center}
	\end{itemize}
	\end{Coro}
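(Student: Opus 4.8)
The plan is to read all three assertions directly off the explicit formula for $f^{(j)}_{pqr}$ furnished by Lemma \ref{An polynomial modulo relation}, namely
\[
f_{pqr}^{(j)} = \sum_{(l_1,l_2) \in \Lambda_{pq}^j} x_{l_1}y_{l_2} - \sum_{\left((i_1,\dots,i_l), (d_1,\dots,d_l) \right) \in \Lambda_{r}^j} \frac{(n+1)!}{d_1!\cdots d_l!} z_{i_1}^{d_1} \cdots z_{i_l}^{d_l}.
\]
Each part asserts that, under a numerical hypothesis on $p,q,r,j$, one of the two index sets $\Lambda_{pq}^j$ and $\Lambda_r^j$ becomes empty; by the convention recorded at the end of Lemma \ref{An polynomial modulo relation}, the corresponding sum is then $0$, which leaves exactly the term claimed in the corollary.

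For (1), I would assume $p+q > j$ and take any $(l_1,l_2) \in \Lambda_{pq}^j$. By definition $l_1 \geq p$ and $l_2 \geq q$, so $l_1 + l_2 \geq p+q > j$, contradicting the requirement $l_1 + l_2 = j$; hence $\Lambda_{pq}^j = \emptyset$ and the first sum vanishes. For (2), I would assume $r(n+1) > j$ and take any $\left((i_1,\dots,i_l),(d_1,\dots,d_l)\right) \in \Lambda_r^j$. Since $r \leq i_1 < i_2 < \cdots < i_l$, every index satisfies $i_k \geq r$, and all $d_k > 0$, so
\[
j = \sum_{k=1}^l i_k d_k \geq r \sum_{k=1}^l d_k = r(n+1) > j,
\]
a contradiction; hence $\Lambda_r^j = \emptyset$ and the second sum vanishes. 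Statement (3) is then immediate, since under both hypotheses the two arguments above give $\Lambda_{pq}^j = \Lambda_r^j = \emptyset$, whence $f^{(j)}_{pqr} = 0$.

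There is no genuine obstacle here: each part is a one-line emptiness check on an index set. The only point deserving a moment's attention is the inequality $\sum_k i_k d_k \geq r \sum_k d_k$ used in (2), which requires the bound $i_k \geq r$ for \emph{every} $k$ rather than merely for $i_1$; this is guaranteed by the strict increase $i_1 < \cdots < i_l$ together with $i_1 \geq r$, so that $i_k \geq i_1 \geq r$ throughout.
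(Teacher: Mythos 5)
Your proposal is correct and follows exactly the route the paper intends: the corollary is stated as an immediate consequence of the explicit formula in Lemma \ref{An polynomial modulo relation}, with each hypothesis forcing the corresponding index set $\Lambda_{pq}^j$ or $\Lambda_r^j$ to be empty, just as you argue. Your added remark that $i_k \geq i_1 \geq r$ for all $k$ correctly justifies the one inequality the paper leaves implicit.
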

	\begin{Nota} \label{periodicity of polynomial f}
We define the polynomial $g_{l,e}^{(j)}$ by
	\begin{center}
$g_{l,e}^{(j)} := f^{(j)}(x_l,...,x_{l+j},y_{e(n+1)-l},...,y_{e(n+1)-l+j},z_e,...,z_{e+j})$,
	\end{center}
where $f^{(j)}$ is regarded as an element of $\CC[x_0,...,x_j,y_0,...,y_j$ $,z_0,...,z_j]$ according to Remark \ref{jet polynomial of order m can be calculate as more than the order m variables}.
	\end{Nota}
Since the polynomial $f$ is weighted homogeneous,
we have the following useful fact.
	\begin{Lemma} (\cite[Section 3]{M1}when $e = 1$) \label{toCalculateJetPolynomialsWhenSomeCoordinate=0}
Assume $e,j,l,n \in \mathbb{Z}_{\geq 0}$ with $0 \leq l \leq e(n+1)$ and $j \geq 0$.
We have
	\begin{alignat}{2}
f_{l,e(n+1) -l,e}^{(e(n+1)+j)} &= g_{l,e}^{(j)}. %\label{generalSlidePolynomial}
	\end{alignat}
	\end{Lemma}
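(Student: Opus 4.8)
The plan is to reduce the identity to the weighted homogeneity of $f = xy - z^{n+1}$, realized as a clean factorization of a power of $t$. I work with formal power series in $t$ (equivalently, by Remark \ref{jet polynomial of order m can be calculate as more than the order m variables} I fix $m$ large enough that none of the coefficients in question is affected by truncation), so that the polynomials $f^{(j)}$ and $f_{pqr}^{(j)}$ are the honest coefficients of $t^j$ in the relevant expansions.

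First I would introduce the three truncated series defining $f_{l,e(n+1)-l,e}$ and factor out the lowest power of $t$ from each:
\begin{align}
\sum_{i \geq l} x_i t^i &= t^l X, \quad X := \sum_{a \geq 0} x_{l+a}t^a,\\
\sum_{i \geq e(n+1)-l} y_i t^i &= t^{e(n+1)-l} Y, \quad Y := \sum_{b \geq 0} y_{e(n+1)-l+b}t^b,\\
\sum_{i \geq e} z_i t^i &= t^e Z, \quad Z := \sum_{c \geq 0} z_{e+c}t^c.
\end{align}
The key computation is then the one-line identity
	\begin{center}
$f(t^l X, t^{e(n+1)-l}Y, t^e Z) = t^l X \cdot t^{e(n+1)-l} Y - (t^e Z)^{n+1} = t^{e(n+1)}\left(XY - Z^{n+1}\right) = t^{e(n+1)} f(X,Y,Z)$,
	\end{center}
which holds precisely because the exponents balance: $l + (e(n+1)-l) = e(n+1) = e\cdot(n+1)$. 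This is exactly the weighted homogeneity of $f$ with weights $l, e(n+1)-l, e$ assigned to $x,y,z$.

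From this identity, the coefficient of $t^{e(n+1)+j}$ on the left, namely $f_{l,e(n+1)-l,e}^{(e(n+1)+j)}$, equals the coefficient of $t^j$ in $f(X,Y,Z)$ on the right. I would then observe that the coefficient of $t^j$ in $f(X,Y,Z)$ is obtained from $f^{(j)}(x_0,\dots,x_j,y_0,\dots,y_j,z_0,\dots,z_j)$ by the substitutions $x_a \mapsto x_{l+a}$, $y_b \mapsto y_{e(n+1)-l+b}$, $z_c \mapsto z_{e+c}$, which is exactly the polynomial $g_{l,e}^{(j)}$ of Notation \ref{periodicity of polynomial f}. This completes the argument.

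As a cross-check, one can instead substitute $p=l$, $q=e(n+1)-l$, $r=e$, and $J=e(n+1)+j$ into the explicit formula of Lemma \ref{An polynomial modulo relation} and track indices directly: writing $l_1 = l+a$, $l_2 = e(n+1)-l+b$ turns $l_1+l_2=J$ into $a+b=j$, giving a bijection $\Lambda_{l,\,e(n+1)-l}^{J} \cong \Lambda_{00}^{j}$, and writing $i_s = e + c_s$ turns $\sum i_s d_s = J$ with $\sum d_s = n+1$ into $\sum c_s d_s = j$, giving a bijection $\Lambda_{e}^{J} \cong \Lambda_{0}^{j}$. The only point requiring care is the weight balance that lets the power $t^{e(n+1)}$ factor out cleanly; in the explicit-formula route, the corresponding minor check is that the upper index bounds ($i_l \leq J$ and $c_l \leq j$) are automatic from the weighted-sum conditions, since each $d_s \geq 1$. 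Beyond this bookkeeping there is no genuine obstacle, the whole content being the homogeneity identity above.
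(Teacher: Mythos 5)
Your proposal is correct and takes essentially the same route as the paper's own proof: both factor the lowest power of $t$ out of each truncated series, use the weighted homogeneity identity $f(t^l X, t^{e(n+1)-l}Y, t^e Z) = t^{e(n+1)}f(X,Y,Z)$, and identify the coefficient of $t^j$ in $f(X,Y,Z)$ with $g_{l,e}^{(j)}$ via the shifted substitution of Notation \ref{periodicity of polynomial f}. The only cosmetic difference is that you work with formal power series (justified by Remark \ref{jet polynomial of order m can be calculate as more than the order m variables}), whereas the paper tracks the truncations modulo $t^{m+1}$ explicitly.
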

	\begin{proof}%[Proof of Lemma \ref{toCalculateJetPolynomialsWhenSomeCoordinate=0}]
For $0 \leq l \leq e(n+1)$,
we can calculate as follows:
	\begin{alignat*}{4}
\displaystyle
\ &f\left(\sum_{i=l}^{m}x_{i}t^i, \sum_{i=e(n+1)-l}^{m}y_{i}t^i, \sum_{i=e}^{m}z_{i}t^i \right)\\
=\ &
\displaystyle
f\left(t^l\sum_{i=0}^{m-l}x_{l+i}t^i,t^{e(n+1)-l}\sum_{i=0}^{m-e(n+1)+l}y_{e(n+1)-l+i}t^{i},t^{e}\sum_{i=0}^{m-e}z_{e+i}t^{i} \right)\\
 =\ &
\displaystyle
t^{e(n+1)}f\left(\sum_{i=0}^{m-l}x_{l+i}t^{i},\sum_{i=0}^{m-e(n+1)+l}y_{e(n+1)-l+i}t^{i},\sum_{i=0}^{m-e}z_{e+i}t^{i} \right).\\
	\end{alignat*}
Then $m-l, m-e(n+1)+l, m-e \geq 0$,
we have
	\begin{alignat*}{7}
\displaystyle
\ &f\left(\sum_{i=0}^{m-l}x_{l+i}t^{i},\sum_{i=0}^{m-e(n+1)+l}y_{e(n+1)-l+i}t^{i},\sum_{i=0}^{m-e}z_{e+i}t^{i} \right)\\
 \equiv\ &
\displaystyle
\sum_{j=0}^{m-e(n+1)}f^{(j)}(x_{l},...,x_{l+j},y_{e(n+1)-l},...,y_{e(n+1)-l+j},z_{e},...,z_{e+j})t^{j}
=\ &
\sum_{j=0}^{m-e(n+1)} g_{l,e}^{(j)}t^j
	\end{alignat*}
modulo $t^{m-e(n+1)+1}$.
Hence we have
	\begin{alignat*}{10}
\displaystyle
\ &f\left(\sum_{i=l}^{m}x_{i}t^i, \sum_{i=e(n+1)-l}^{m}y_{i}t^i, \sum_{i=e}^{m}z_{i}t^i \right)
 \equiv\ &
 t^{e(n+1)}\sum_{j=0}^{m-e(n+1)} g_{l,e}^{(j)}t^j
 =\ &
 \sum_{j=0}^{m-e(n+1)} g_{l,e}^{(j)}t^{e(n+1)+j}
 	\end{alignat*}
modulo $t^{m+1}$.
Looking at the coefficients of $t^{e(n+1)+j}$ for $0 \leq j \leq m-e(n+1)$,
we have
	\begin{alignat*}{2}
f_{l,e(n+1)-l,e}^{(e(n+1)+j)} &= g_{l,e}^{(j)}.
	\end{alignat*}
	\end{proof}
	\begin{Rem}\label{Slide Polynomials and apeearing coordinates}
We note that the variables appearing in $g_{l,e}^{(j)}$ are disjoint from $x_0,...,x_{l-1}, y_0,...$, $y_{e(n+1)-l -1}$ and $z_0,...,z_{e-1}$.
	\end{Rem}
We will now describe the defining ideals of the irreducible components of the singular fiber.
	\begin{Nota} \label{Definition of the defining ideal of the irreducible components}
Let $l \in \mathbb{Z}$ with $1 \leq l \leq n$.
We define the ideal $G_{m}^{l}$ by
	\begin{center}
$G^l_{m} = \langle g_{l,1}^{(0)},...,g_{l,1}^{(m-n-1)} \rangle$.
	\end{center}
(For $m = n$,
we set $G_n^l = 0$.)
We define the ideal $I_m^l$ by
	\begin{center}
$I_m^l := \langle L_{l,n+1-l,1}, f^{(0)},..., f^{(m)} \rangle = \langle x_0,...,x_{l-1},y_0,...,y_{n-l},z_0,z_1,f^{(0)},...,f^{(m)} \rangle$.
	\end{center}
Let $Z_m^l$ denote the subvariety of $(\mathbb{A}^3)_m \cong \mathbb{A}^{3(m+1)}$ defined by $I_m^l$,
i.e.
	\begin{center}
$Z_m^l = \mathbf{V}(I_m^l)$.
	\end{center}
	\end{Nota}
	\begin{Lemma}(\cite[Section 3]{M1}) \label{standerd Notation of Ideals}
Let $l \in \mathbb{Z}$ with $1 \leq l \leq n$.
We have
	\begin{center}
$I_m^l = L_{l,n+1-l,1} + G_m^l$.
	\end{center}
In particular,
if $m = n$,
then we have
	\begin{center}
$I_n^l = L_{l,n+1-l,1}$.
	\end{center}
	\end{Lemma}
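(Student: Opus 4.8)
\emph{Proof plan.} The plan is to simplify the generating set $\{f^{(0)},\dots,f^{(m)}\}$ of $I_m^l$ by working modulo the monomial ideal $L_{l,n+1-l,1}$, replacing each $f^{(j)}$ by a generator that is either zero or one of the $g_{l,1}^{(j')}$. Since replacing a generator by a congruent one modulo $L_{l,n+1-l,1}$ does not change the ideal $\langle L_{l,n+1-l,1},\dots\rangle$, it suffices to identify each $f^{(j)}$ modulo $L_{l,n+1-l,1}$. I would begin by applying Lemma \ref{An polynomial modulo relation} with $(p,q,r)=(l,\,n+1-l,\,1)$, which yields $f^{(j)}\equiv f^{(j)}_{l,n+1-l,1}\pmod{L_{l,n+1-l,1}}$ for every $j\le m$, reducing the problem to analyzing the polynomials $f^{(j)}_{l,n+1-l,1}$.

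Next I would split the range of $j$ at the threshold $n+1$. With the present choice of indices we have $p+q=n+1$ and $r(n+1)=n+1$, so for $0\le j\le n$ both inequalities $p+q>j$ and $r(n+1)>j$ hold, and Corollary \ref{Properties of jet polynomials}(3) gives $f^{(j)}_{l,n+1-l,1}=0$. Hence $f^{(j)}\in L_{l,n+1-l,1}$ for $0\le j\le n$, and these generators can be discarded. For $n+1\le j\le m$ I would write $j=(n+1)+j'$ with $0\le j'\le m-n-1$ and invoke Lemma \ref{toCalculateJetPolynomialsWhenSomeCoordinate=0} in the case $e=1$ (legitimate since $1\le l\le n\le n+1$), obtaining $f^{(j)}_{l,n+1-l,1}=g_{l,1}^{(j')}$.

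Combining the two ranges, modulo $L_{l,n+1-l,1}$ the only surviving generators $f^{(n+1)},\dots,f^{(m)}$ become exactly $g_{l,1}^{(0)},\dots,g_{l,1}^{(m-n-1)}$, which are by definition the generators of $G_m^l$. Therefore $I_m^l=\langle L_{l,n+1-l,1},f^{(0)},\dots,f^{(m)}\rangle=\langle L_{l,n+1-l,1},g_{l,1}^{(0)},\dots,g_{l,1}^{(m-n-1)}\rangle=L_{l,n+1-l,1}+G_m^l$. The special case $m=n$ then drops out automatically: the index range $0\le j'\le m-n-1$ is empty, so no $g$'s appear, $G_n^l=0$, and $I_n^l=L_{l,n+1-l,1}$.

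I do not expect a deep obstacle here; the argument is essentially a reorganization of the three preceding results. The only point requiring care is the index bookkeeping: one must verify that the two inequalities $p+q>j$ and $r(n+1)>j$ hold simultaneously precisely on $0\le j\le n$, and that the shift $j\mapsto j-(n+1)$ matches the index range $0\le j'\le m-n-1$ of the generators of $G_m^l$, so that no generator is lost, double-counted, or misaligned.
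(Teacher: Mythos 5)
Your proposal is correct and follows essentially the same route as the paper: both reduce each $f^{(j)}$ modulo $L_{l,n+1-l,1}$ via Lemma \ref{An polynomial modulo relation}, kill the range $0\le j\le n$ with Corollary \ref{Properties of jet polynomials}(3) using $l+(n+1-l)=n+1>j$ and $1\cdot(n+1)>j$, and identify $f^{(n+1+j')}_{l,n+1-l,1}$ with $g_{l,1}^{(j')}$ via Lemma \ref{toCalculateJetPolynomialsWhenSomeCoordinate=0}. Your index bookkeeping (including the empty range giving $G_n^l=0$ when $m=n$) matches the paper's argument exactly.
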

	\begin{proof}
We apply Corollary \ref{Properties of jet polynomials} to $f_{l, n+1-l, 1}^{(i)}$ for $i = 0,..., n$.
From the assumption,
we have $l + (n+1-l) = n+1 > i$ and $1\times (n+1) = n+1 > i$,
and hence we have $f_{l,n+1-l,1}^{(i)} = 0$ by Corollary \ref{Properties of jet polynomials}(3).
By Lemma \ref{An polynomial modulo relation},
we have
	\begin{center}
$f^{(i)} \equiv f_{l,n+1-l,1}^{(i)} = 0 \ \mathrm{mod}\ L_{l,n+1-l,1}$
	\end{center}
for $i = 0, ..., n$.
Moreover,
from Lemma \ref{An polynomial modulo relation},
we have $f^{(n+1+j)} \equiv f^{(n+1+j)}_{l,n+1-l,1}$ mod $L_{l,n+1-l,1}$,
if $n+1+j \leq m$,
and from Lemma \ref{toCalculateJetPolynomialsWhenSomeCoordinate=0},
we have
	\begin{center}
$g^{(j)}_{l,1} = f^{(n+1+j)}_{l,n+1-l,1} \equiv f^{(n+1+j)} \ \mathrm{mod} \ L_{l,n+1-l,1}$.
	\end{center}
Hence in the case $m = n$,
we have
	\begin{center}
$I_n^l = L_{l,n+1-l,1}$,
	\end{center}
and in the case $m > n$,
we have
	\begin{alignat}{3}
I_m^l =&\  L_{l,n+1-l,1} + \langle f^{(n+1)},...,f^{(m)} \rangle \\
 =&\  L_{l,n+1-l,1} + \langle g_{l,1}^{(0)},...,g_{l,1}^{(m-n-1)} \rangle &\ = L_{l,n+1-l,1} + G_m^l.
	\end{alignat}
	\end{proof}

These closed subvarieties give the irreducible decomposition of $X_m^0$.

	\begin{Prop} (\cite[Theorem 3.1]{M1}, \cite[Proposition 1.5, Theorem 3.3]{Mu}) \label{irreducibility and irreducible decomposition of singular fiber}
We have
	\begin{center}
$Z_n^l \cong \mathbb{A}^{2n+1}$
	\end{center}
and
	\begin{center}
$Z_m^l \cong X_{m-n-1} \times \mathbb{A}^{2n+1}$
	\end{center}
if $m \geq n+1$.
Moreover,
the ideal $\langle f^{(0)},...,f^{(l)} \rangle$ which is the defining of $X_l$ is prime for $l \geq 0$,
so the variety $X_l$ is irreducible.
In particular,
for $m \geq n$,
the varieties $Z_m^l$ are irreducible.
The irreducible decomposition of $X_m^0$ is given by
	\begin{center}
$\displaystyle X_m^0 = \bigcup_{l=1}^n Z_m^l$.
	\end{center}
	\end{Prop}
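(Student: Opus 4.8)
The plan is to dispatch the four assertions in order, obtaining the two isomorphisms and the irreducibility of $Z_m^l$ from the ideal description in Lemma \ref{standerd Notation of Ideals}, and reducing the irreducible decomposition to an order count on jets. For the isomorphisms I start from $I_m^l = L_{l,n+1-l,1} + G_m^l$. When $m = n$ we have $G_n^l = 0$, so $I_n^l = L_{l,n+1-l,1}$ is generated by the $n+2$ coordinate functions $x_0,\dots,x_{l-1},y_0,\dots,y_{n-l},z_0$; hence $Z_n^l$ is the coordinate linear subspace of $\mathbb{A}^{3(n+1)}$ they cut out, namely $\mathbb{A}^{3(n+1)-(n+2)} = \mathbb{A}^{2n+1}$. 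For $m \geq n+1$ I would first intersect with $\mathbf{V}(L_{l,n+1-l,1})$, which kills those $n+2$ variables and leaves the free coordinates $x_l,\dots,x_m,\,y_{n+1-l},\dots,y_m,\,z_1,\dots,z_m$. On this subspace the remaining generators are $g_{l,1}^{(0)},\dots,g_{l,1}^{(m-n-1)}$, and the substitution $X_i := x_{l+i}$, $Y_i := y_{n+1-l+i}$, $Z_i := z_{1+i}$ carries $g_{l,1}^{(j)}$ to $f^{(j)}(X_0,\dots,X_j,Y_0,\dots,Y_j,Z_0,\dots,Z_j)$ by Notation \ref{periodicity of polynomial f}; these are exactly the equations defining $X_{m-n-1}$ in the variables $x_l,\dots,x_{m-n-1+l},\,y_{n+1-l},\dots,y_{m-l},\,z_1,\dots,z_{m-n}$. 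By Remark \ref{Slide Polynomials and apeearing coordinates} they involve none of the remaining $2n+1$ coordinates $x_{m-n+l},\dots,x_m,\,y_{m-l+1},\dots,y_m,\,z_{m-n+1},\dots,z_m$, which therefore stay free. This gives $Z_m^l \cong X_{m-n-1} \times \mathbb{A}^{2n+1}$.

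For irreducibility, the primeness of $\langle f^{(0)},\dots,f^{(l)}\rangle$ I would take from \cite{Mu}: the $A_n$-surface is a local complete intersection with rational (indeed canonical) singularities, so the cited criterion gives that each $X_l$ is irreducible, of the expected dimension $2(l+1)$. Irreducibility of $Z_m^l$ for $m \geq n$ then follows from the isomorphisms above, since $Z_n^l$ is an affine space and $Z_m^l$ is, for $m \geq n+1$, a product of the irreducible variety $X_{m-n-1}$ with an affine space.

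For the decomposition I would prove the two inclusions separately. The inclusion $\bigcup_l Z_m^l \subseteq X_m^0$ is immediate: $X_m^0 = \pi_m^{-1}(0)$ is defined by $\langle x_0,y_0,z_0,f^{(0)},\dots,f^{(m)}\rangle$, and for $1 \leq l \leq n$ the ideal $L_{l,n+1-l,1}$ already contains $x_0,y_0,z_0$, so $I_m^l$ contains this ideal. The reverse inclusion is the substantive step and the place where I expect the real work. Given $\gamma = (x(t),y(t),z(t)) \in X_m^0$, set $a = \mathrm{ord}_\gamma(x)$, $b = \mathrm{ord}_\gamma(y)$, $c = \mathrm{ord}_\gamma(z)$; these are at least $1$ since the coordinates vanish at $t = 0$. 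From $x y \equiv z^{n+1} \bmod t^{m+1}$ a short case analysis on whether $\mathrm{ord}(z^{n+1}) = (n+1)c$ and $\mathrm{ord}(xy) = a+b$ exceed $m$ (this is exactly where the hypothesis $m \geq n$ enters) yields $a + b \geq n+1$. Consequently the interval $[\max(1,n+1-b),\min(n,a)]$ is nonempty, and any integer $l$ in it satisfies $a \geq l$, $b \geq n+1-l$, $c \geq 1$, i.e.\ $\gamma \in \mathbf{V}(L_{l,n+1-l,1})$; since $\gamma$ already lies in $\mathbf{V}(f^{(0)},\dots,f^{(m)})$, this places $\gamma$ in $Z_m^l$. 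This order-bookkeeping is the content of \cite[Theorem 3.1]{M1}, which I would reproduce for the surface at hand. Finally, each $Z_m^l$ has dimension $2(m-n) + (2n+1) = 2m+1$, independent of $l$, so no component lies in another and the decomposition is irredundant.
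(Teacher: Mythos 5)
Your proof is correct, but note that the paper itself offers no proof of this Proposition at all: it is imported wholesale from \cite[Theorem 3.1]{M1} and \cite[Proposition 1.5, Theorem 3.3]{Mu}, so there is no internal argument to match against. What you have done is reconstruct the cited proofs, and the reconstruction is sound: the product structure $Z_m^l \cong X_{m-n-1}\times \mathbb{A}^{2n+1}$ via the normal form $I_m^l = L_{l,n+1-l,1}+G_m^l$ of Lemma \ref{standerd Notation of Ideals} and the variable relabeling $g_{l,1}^{(j)} \mapsto f^{(j)}$ is exactly Mourtada's mechanism (and your count of the $2n+1$ untouched coordinates $x_{m-n+l},\dots,x_m$, $y_{m-l+1},\dots,y_m$, $z_{m-n+1},\dots,z_m$ checks out); the primeness of $\langle f^{(0)},\dots,f^{(l)}\rangle$ is taken from Musta\c{t}\u{a}'s lci-with-rational-singularities criterion, precisely as the paper's citation indicates; and your order-bookkeeping for the covering is right, including the identification of where $m \geq n$ enters --- if $a+b \leq n \leq m$ the coefficient $x_ay_b \neq 0$ of $t^{a+b}$ in $xy$ survives modulo $t^{m+1}$ while $\mathrm{ord}(z^{n+1}) \geq n+1 > a+b$, forcing the contradiction, whence $a+b \geq n+1$ and the interval $[\max(1,n+1-b),\min(n,a)]$ is nonempty. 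The only loose stitch is the final irredundancy claim: equidimensionality ($\dim Z_m^l = 2m+1$ for all $l$) rules out \emph{strict} containment between components but not coincidence, so you should add the one-line observation that the $Z_m^l$ are pairwise distinct --- e.g.\ the point $(t^l, t^{n+1-l}, t)$ lies in $Z_m^l$ but not in $Z_m^{l'}$ for $l' \neq l$, since one of $x_l, y_{n+1-l}$ must vanish on $Z_m^{l'}$ --- after which equal dimension does yield irredundancy.
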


Now we study the intersections of irreducible components of $X_m^{0}$.
We define the following ideals:
For $1 \leq i < j \leq n$,
	\begin{center}
$J_m^{i,j} := I_m^i + I_m^j$\ ,\ \ $I_m^{i,j}$ := $\sqrt{J_m^{i,j}}$.
	\end{center}
Recalling Notation \ref{Definition of the defining ideal of the irreducible components},
we have 
	\begin{alignat}{5}
L_{j,n+1-i,1} =&\ \langle x_0,...,x_{j-1},y_0,...,y_{n-i},z_0 \rangle \\
=&\ L_{i,n+1-i,1} + L_{j,n+1-j,1}
	\end{alignat}
and have
	\begin{alignat}{3}
J_m^{i,j} = L_{j,n+1-i,1} + \langle f^{(0)},...,f^{(m)} \rangle.
	\end{alignat}
From the definition of $I_m^{i,j}$,
we have $\mathbf{V}(I_m^{i,j}) = Z_m^i \cap Z_m^j$.
Now we give the irreducible decomposition of the closed subvariety $Z_m^i \cap Z_m^j$.

	\begin{Thm} \label{irreducible decomposition of intersection of A_n-type}
Assume $m \geq n \geq 2$.
Let $1 \leq i < j \leq n$.
	\begin{itemize}
\item[(a)] If $m = n$,
then $I_n^{i,j} = L_{j,n+1-i,1}$ and
	\begin{center}
$Z_n^i \cap Z_n^j = \mathbf{V}(L_{j,n+1-i,1})$
	\end{center}
is irreducible.
\item[(b)] If $1 \leq m-n \leq j-i$,
then $I_m^{i,j} = L_{j,n+1-i,2}$ and
	\begin{center}
$Z_m^i \cap Z_m^j = \mathbf{V}(L_{j,n+1-i,2})$
	\end{center}
is irreducible.
\item[(c)] If $j-i \leq m-n$ and $m < 2n+2$,
then
	\begin{center}
$\displaystyle I_m^{i,j} = \bigcap_{u=0}^{m-n-(j-i)} L_{j+u,m-j-u+1,2}$
	\end{center}
and hence the irreducible decomposition of $Z_m^i \cap Z_m^j$ is given by
	\begin{center}
$Z_m^i \cap Z_m^j = \displaystyle  \bigcup_{u = 0}^{m - n-(j-i)} \mathbf{V}(L_{j+u,m-j-u+1,2})$.
	\end{center}
\item[(d)] If $m \geq 2n + 2$,
then
	\begin{center}
$\displaystyle I_m^{i,j} = \bigcap_{u=0}^{n+1-(j-i)} (L_{j+u,2n+2 -j-u,2} + \langle f^{(2n+2)},...,f^{(m)} \rangle)$.
	\end{center}
The ideal $L_{j+u,2n+2 -j-u,2} + \langle f^{(2n+2)},...,f^{(m)} \rangle$ is prime for $0 \leq u \leq n+1-(j-i)$,
and the irreducible decomposition of $Z_m^i \cap Z_m^j$ is given by
	\begin{center}
$Z_m^i \cap Z_m^j = \displaystyle  \bigcup_{u = 0}^{n+1 - (j-i)} \mathbf{V}(L_{j+u,2n+2 -j-u,2} + \langle f^{(2n+2)},...,f^{(m)} \rangle)$.
	\end{center}
	\end{itemize}

	\end{Thm}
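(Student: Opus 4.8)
The plan is to compute the radical $I_m^{i,j}=\sqrt{J_m^{i,j}}$ from the presentation $J_m^{i,j}=L_{j,n+1-i,1}+\langle f^{(0)},\dots,f^{(m)}\rangle$ obtained above, and then read off the irreducible components geometrically via the Nullstellensatz. First I would reduce every generator modulo the monomial ideal $L_{j,n+1-i,1}$: by Lemma \ref{An polynomial modulo relation} one has $f^{(k)}\equiv f^{(k)}_{j,n+1-i,1}$, and Corollary \ref{Properties of jet polynomials} gives $f^{(k)}_{j,n+1-i,1}=0$ for all $k\le n$, since then $j+(n+1-i)=n+1+(j-i)>k$ and $n+1>k$. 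This settles part (a): when $m=n$ only these generators occur, so $J_n^{i,j}=L_{j,n+1-i,1}$, which is prime; hence $I_n^{i,j}=L_{j,n+1-i,1}$ and $\mathbf V(L_{j,n+1-i,1})$ is irreducible.

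For $m\ge n+1$ the decisive observation is that $f^{(n+1)}_{j,n+1-i,1}=-z_1^{n+1}$ up to a nonzero scalar: the $xy$-part is empty because $n+1<n+1+(j-i)$, and the only element of $\Lambda_1^{n+1}$ corresponds to $z_1^{n+1}$. Thus $z_1\in\sqrt{J_m^{i,j}}$ and I may work modulo $z_1$ as well. The combinatorial heart of the matter is that for $k<2(n+1)$ every $z$-monomial of $f^{(k)}_{j,n+1-i,1}$ is divisible by $z_1$: if its smallest index were $\ge 2$, then $k=\sum i_s d_s\ge 2\sum d_s=2(n+1)$. Consequently, modulo $L_{j,n+1-i,1}+\langle z_1\rangle=L_{j,n+1-i,2}$, each $f^{(k)}$ with $k\le 2n+1$ collapses to its $xy$-part $\sum_{a+b=k,\,a\ge j,\,b\ge n+1-i}x_ay_b$, which is nonzero only when $k\ge n+1+(j-i)$. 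In case (b), where $m\le n+(j-i)$, even these $xy$-parts never appear, so $J_m^{i,j}\subseteq L_{j,n+1-i,2}$ while $z_1^{n+1}\in J_m^{i,j}$ gives the reverse inclusion of radicals; hence $I_m^{i,j}=L_{j,n+1-i,2}$, which is prime.

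In case (c) I would pass to the set-theoretic description of $\mathbf V(J_m^{i,j})$. A point $\gamma$ there has $z_0=z_1=0$, so $\mathrm{ord}_\gamma(z)\ge 2$ and $\mathrm{ord}_\gamma(z^{n+1})\ge 2n+2$; since $m<2n+2$ this means $z^{n+1}\equiv 0\bmod t^{m+1}$, and the surviving $xy$-equations for $n+1+(j-i)\le k\le m$ say precisely that $\mathrm{ord}_\gamma(xy)\ge m+1$, i.e.\ $\mathrm{ord}_\gamma(x)+\mathrm{ord}_\gamma(y)\ge m+1$, together with $\mathrm{ord}_\gamma(x)\ge j$ and $\mathrm{ord}_\gamma(y)\ge n+1-i$. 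The maximal coordinate subspaces inside this locus lie on the boundary $\mathrm{ord}(x)+\mathrm{ord}(y)=m+1$, i.e.\ $\mathrm{ord}(x)\ge j+u$ and $\mathrm{ord}(y)\ge m+1-(j+u)$; the bound $\mathrm{ord}(y)\ge n+1-i$ restricts $u$ to $0\le u\le m-n-(j-i)$, yielding the prime ideals $L_{j+u,m-j-u+1,2}$ with pairwise distinct generic orders. By the Nullstellensatz $I_m^{i,j}=\bigcap_{u}L_{j+u,m-j-u+1,2}$, and the $\mathbf V(L_{j+u,m-j-u+1,2})$ are exactly the irreducible components.

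Case (d), where $m\ge 2n+2$, is where I expect the main difficulty. Now $z^{n+1}$ no longer vanishes modulo $t^{m+1}$: the equations $f^{(k)}$ with $k\le 2n+1$ still force $\mathrm{ord}_\gamma(x)+\mathrm{ord}_\gamma(y)\ge 2n+2$ (their $z$-parts die modulo $z_1$), but $f^{(2n+2)},\dots,f^{(m)}$ must be kept. I would decompose $\mathbf V(J_m^{i,j})$ according to the generic orders $(\mathrm{ord}(x),\mathrm{ord}(y))=(j+u,\,2n+2-j-u)$ on the boundary $\mathrm{ord}(x)+\mathrm{ord}(y)=2n+2$, with $u$ restricted to $0\le u\le n+1-(j-i)$ by the two lower bounds; a direct check that every point admits such a $u$ furnishes the set-theoretic covering. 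The crucial point is the primality of $P_u:=L_{j+u,2n+2-j-u,2}+\langle f^{(2n+2)},\dots,f^{(m)}\rangle$: modulo $L_{j+u,2n+2-j-u,2}$, Lemma \ref{toCalculateJetPolynomialsWhenSomeCoordinate=0} with $e=2$ and $l=j+u$ identifies $f^{(2n+2+s)}$ with the slid polynomial $g_{j+u,2}^{(s)}$, which by Remark \ref{Slide Polynomials and apeearing coordinates} involves a fresh block of variables. After reindexing, the quotient ring is $(\CC[\dots]/\langle f^{(0)},\dots,f^{(m-2n-2)}\rangle)\otimes_\CC\CC[\text{free variables}]$, i.e.\ the coordinate ring of $X_{m-2n-2}$ times an affine space; this is a domain by Proposition \ref{irreducibility and irreducible decomposition of singular fiber}, so $P_u$ is prime. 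Since the $P_u$ have pairwise distinct generic orders there are no inclusions among the $\mathbf V(P_u)$, and the Nullstellensatz yields $I_m^{i,j}=\bigcap_{u=0}^{n+1-(j-i)}P_u$ with the stated decomposition. The genuine obstacles are controlling which $z$-monomials survive modulo $z_1$ (the $\Lambda_1^k$ combinatorics) and, in (d), simultaneously verifying the set-theoretic covering and the slide-lemma reduction that proves primality.
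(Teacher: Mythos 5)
Your proposal is correct, but in cases (c) and (d) it takes a genuinely different route from the paper. Cases (a) and (b) coincide with the paper's proof (your divisibility-by-$z_1$ combinatorics, namely that a $z$-monomial with smallest index $\geq 2$ forces $k \geq 2(n+1)$, is exactly the content of Corollary \ref{Properties of jet polynomials} applied with $r=2$). For (c), however, the paper does not argue set-theoretically: it inducts on $m$, using $Z_m^i \cap Z_m^j = \pi_{m,m-1}^{-1}(Z_{m-1}^i \cap Z_{m-1}^j)$ and the splitting $f^{(m)} \equiv x_{j+u}y_{m-j-u} \bmod L_{j+u,m-j-u,2}$, so that $L_{j+u,m-j-u,2} + \langle f^{(m)} \rangle = L_{j+u+1,m-j-u,2} \cap L_{j+u,m-j-u+1,2}$, each component bifurcating at each step; case (d) is likewise reduced to the $m = 2n+1$ case via $\pi_{m,2n+1}^{-1}$, with primality of $P_u = L_{j+u,2n+2-j-u,2} + \langle f^{(2n+2)},\dots,f^{(m)}\rangle$ proved exactly as you propose (Lemma \ref{toCalculateJetPolynomialsWhenSomeCoordinate=0} with $e=2$ and $l=j+u$, Remark \ref{Slide Polynomials and apeearing coordinates}, and Proposition \ref{irreducibility and irreducible decomposition of singular fiber}). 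Your replacement --- describing $\mathbf{V}(J_m^{i,j})$ by the order conditions $\mathrm{ord}(x) \geq j$, $\mathrm{ord}(y) \geq n+1-i$, $\mathrm{ord}(z) \geq 2$, $\mathrm{ord}(x)+\mathrm{ord}(y) \geq m+1$ (resp.\ $\geq 2n+2$), covering by the boundary loci via $u = \min(\mathrm{ord}(x)-j,\, m-n-(j-i))$ (resp.\ $n+1-(j-i)$), and then invoking the Nullstellensatz --- is sound and has the merit of treating (c) and (d) uniformly and without induction. What the paper's route buys is an explicit certificate of irredundancy in (d): it exhibits $x_{j+u_2-1}$ and $y_{2n+1-j-u_1}$ as separating elements, using that every term of every generator $g^{(v)}_{j+u,2}$ has degree at least $2$, so no linear form lies in $P_u$. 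Your appeal to ``pairwise distinct generic orders'' tacitly uses this same fact: to know the generic order of $x$ along $\mathbf{V}(P_u)$ is exactly $j+u$ you need $x_{j+u} \notin P_u$, which does follow from your own identification of the quotient with the coordinate ring of $X_{m-2n-2} \times \mathbb{A}^{4n+2}$ (on which $x_{j+u}$ is a coordinate of the jet-scheme factor, not identically zero), but this half-line of justification should be made explicit; in (c) irredundancy is immediate since the $L_{j+u,m-j-u+1,2}$ are monomial primes with incomparable generator sets, matching the paper's check.
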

	\begin{Rem} \label{the index conditions in thm}
In (b),
we have $1 \leq m-n \leq j-i$,
so $n < m \leq n+(j-i) \leq 2n-1 < 2n+2$.

In (c),
the number of irreducible components of $Z_m^i \cap Z_m^j$ is $m-n-(j-i)+1 \geq 1$.

In (d),
the number of irreducible components of $Z_m^i \cap Z_m^j$ is $n-(j-i)+2 \geq 3$.

In particular,
in the cases (c) and (d),
the number of irreducible components decreases as $j-i$ increases.
	\end{Rem}
	\begin{proof}
First of all,
from Lemma \ref{An polynomial modulo relation}, we have
	\begin{center}
$f^{(l)} \equiv f_{j,n+1-i,1}^{(l)}\ \mathrm{mod}\ L_{j,n+1-i,1}$.
	\end{center}
(a)
Recall that $J_m^{i,j} = L_{j,n+1-i,1} + \langle f^{(0)},...,f^{(m)} \rangle$.
%First we prove the assertion in the case $m = n$,
We prove that
	\begin{center}
$J_n^{i,j} = L_{j,n+1-i,1} = \langle x_0,...,x_{j-1},y_0,...,y_{n-i},z_0 \rangle$,
	\end{center}
i.e.,
$f^{(0)},...,f^{(n)} \in L_{j,n+1-i,1}$.
In fact,
for $l = 0,...,n$,
by Corollary \ref{Properties of jet polynomials}(3) and $j + (n+1-i) > n \geq l$ and $1\times (n+1) > n \geq l$,
we have
	\begin{center}
$f_{j,n+1-i,1}^{(l)} =  0$.
	\end{center}
So we have
	\begin{center}
$f^{(l)} \equiv f_{j,n+1-i,1}^{(l)} =  0$ mod $L_{j,n+1-i,1}$
	\end{center}
for $l = 0,...,n$ and
	\begin{center}
$J_n^{i,j} = L_{j,n+1-i,1} = \langle x_0,...,x_{j-1},y_0,...,y_{n-i},z_0 \rangle$.
	\end{center}
This ideal is clearly prime,
hence
	\begin{center}
$I_n^{i,j} = L_{j,n+1-i,1}$.
	\end{center}
(b)
We prove that
	\begin{center}
$I_m^{i,j} = L_{j,n+1-i,2} = L_{j,n+1-i,1} + \langle z_1 \rangle$.
	\end{center}
In fact,
from Corollary \ref{Properties of jet polynomials}(1) and $j + (n-i+1) > n+1$,
we have
	\begin{center}
$f^{(n+1)}_{j,n-i+1,1} = -z_1^{n+1}$.
	\end{center}
It follows that
	\begin{center}
$J_{n+1}^{i,j} = L_{j,n+1-i,1} + \langle z_1^{n+1} \rangle$,
	\end{center}
hence we have
	\begin{center}
$I_{n+1}^{i,j} = \sqrt{J_{n+1}^{i,j}} = L_{j,n+1-i,2}$,
	\end{center}
and this ideal is clearly prime.
For any $l$ with $n+1 < l \leq m$,
we have $j + (n+1-i) > m \geq l$ from the assumption $j-i \leq m-n$ and $2(n+1) > m \geq l$ and by Remark \ref{the index conditions in thm}.
Hence we have
	\begin{center}
$f^{(l)}_{j,n+1-i,2} = 0$,
	\end{center}
by Corollary \ref{Properties of jet polynomials}(3),
and
	\begin{center}
$f^{(l)} \equiv 0\ \mathrm{mod}\ L_{j,n+1-i,2}$ $\ (l = n+2,...,m)$
	\end{center}
by Lemma \ref{An polynomial modulo relation}.
Thus
	\begin{center}
$I_{n+1}^{i,j} + \langle f^{(n+2)},...,f^{(m)} \rangle = L_{j,n+1-i,2} + \langle f^{(n+2)},...,f^{(m)} \rangle = L_{j,n+1-i,2}$.
	\end{center}
holds.
We also have
	\begin{center}
$J_{m}^{i,j} \subseteq I_{n+1}^{i,j} + \langle f^{(n+2)},...,f^{(m)} \rangle = L_{j,n+1-i,2} \subseteq I_m^{i,j}$,
	\end{center}
and taking the radicals,
we have $I_m^{i,j} = L_{j,n+1-i,2}$.\\
(c)
In general,
for a subvariety $V = \mathbf{V}(I) \subseteq X_m$,
we have $\pi_{m+1,m}^{-1}(V) = \mathbf{V}(I + \langle f^{(m+1)} \rangle)$.
Hence $\pi_{m+1,m}^{-1}(Z_m^i) = Z_{m+1}^i$.
If $Z_m^i \cap Z_m^j = V_1\cup \cdots \cup V_r$,
then
	\begin{center}
$Z_{m+1}^i \cap Z_{m+1}^j = \pi_{m+1,m}^{-1}(Z_m^i) \cap \pi_{m+1,m}^{-1}(Z_m^j) = \pi_{m+1,m}^{-1}(V_1) \cup \cdots \cup \pi_{m+1,m}^{-1}(V_r)$.
	\end{center}
Using this we prove the assertion by induction on $m \geq n+ (j-i)$.

The case $m = n + (j-i)$ actually belongs to the case (b) since $m-n = j-i$,
and the assertion is true.

For $m > n + (j-i)$,
we assume that the claim is true for $m-1$,
i.e.
the irreducible decomposition of $Z_{m-1}^i \cap Z_{m-1}^j$ is
	\begin{center}
$Z_{m-1}^i \cap Z_{m-1}^j = \displaystyle  \bigcup_{u = 0}^{m-1 - n-(j-i) } \mathbf{V}(L_{j+u,m-j-u,2})$.
	\end{center}
Now,
we consider the ideal
	\begin{center}
$\langle x_0,...,x_{j-1 + u},y_0,...,y_{m-1 -j-u},z_0,z_1,f^{(m)} \rangle = L_{j+u,m-j-u,2} + \langle f^{(m)} \rangle$,
	\end{center}
which is a defining ideal of $\pi_{m,m-1}^{-1}(\mathbf{V}(L_{j+u,m-j-u,2}))$.
%Similarly the case $n -(j-i) +1$,
We apply Corollary \ref{Properties of jet polynomials}(2) to $f^{(m)}_{j+u,m-j-u,2}$.
We have $2(n+1) > m$ from the assumption of the assertion (c),
and so from $(j+u) + (m-j-u) = m$,
we have
	\begin{center}
$f^{(m)}_{j+u,m-j-u,2} = x_{j+u}y_{m-j-u}$.
	\end{center}
By Lemma \ref{An polynomial modulo relation},
we have
	\begin{center}
$f^{(m)} \equiv f^{(m)}_{j+u,m-j-u,2}\ \mathrm{mod}\ L_{j+u,m-j-u,2}$.
	\end{center}
Hence we have
	\begin{alignat}{5}
L_{j+u,m-j-u,2} + \langle f^{(m)} \rangle &\ = L_{j+u,m-j-u,2} + \langle x_{j+u}y_{m-j-u} \rangle\\
 &\ =\ (L_{j+u,m-j-u,2} + \langle x_{j+u} \rangle) \cap (L_{j+u,m-j-u,2} + \langle y_{m-j-u} \rangle)\\
&\ =\ L_{j+u+1,m-j-u,2} \cap L_{j+u,m-j-u+1,2}.
	\end{alignat}
The two ideals in the right hand side are prime,
so the irreducible decomposition of $\pi_{m,m-1}^{-1}(\mathbf{V}(L_{j+u,m-j-u,2}))$ is given by
	\begin{center}
$\pi_{m,m-1}^{-1}(\mathbf{V}(L_{j+u,m-j-u,2})) = \mathbf{V}(L_{j+u+1,m-j-u,2}) \cup \mathbf{V}(L_{j+u,m-j-u+1,2})$.
	\end{center}
Thus we have
	\begin{center}
$Z_m^i \cap Z_m^j = \displaystyle  \bigcup_{u = 0}^{m - n-(j-i)} \mathbf{V}(L_{j+u,m-j-u+1,2})$.
	\end{center}
Finally,
we have to prove that none of the ideals $L_{j+u,m-j-u+1,2}$ contain another.
In fact,
for $u_1 < u_2$,
we have
	\begin{align}
x_{j + u_2 -1} \in&\ L_{j+u_2,m-j-u_2+1,2} - L_{j+u_1,m-j-u_1+1,2}\\
\mathrm{and}\ y_{m-j-u_1} \in&\ L_{j+u_1,m-j-u_1+1,2} - L_{j+u_2,m-j-u_2+1,2}.
	\end{align}
So the above decomposition is the irreducible decomposition.
\\
(d)
As in the case (c),
we have only to show that the ideal
	\begin{center}
$L_{j+u,2n+2-j-u,2} + \langle f^{(2n+2)},...,f^{(m)} \rangle = \langle x_0,...,x_{j-1 + u},y_0,...,y_{2n-j+1-u},z_0,z_1,f^{(2n+2)},...,f^{(m)} \rangle$,
	\end{center}
which is a defining ideal of $\pi_{m,2n+1}^{-1}(\mathbf{V}(L_{j+u,2n+2-j-u,2})),$
is prime for $u = 0,...,n+1-(j-i)$,
and none of the ideals $L_{j+u,2n+2-j-u,2} + \langle f^{(2n+2)},..., f^{(m)} \rangle$ contain another.

First we prove that the ideal
	\begin{center}
$L_{j+u,2n+2-j-u,2} + \langle f^{(2n+2)},..., f^{(m)} \rangle$
	\end{center}
is prime.
From Lemma \ref{An polynomial modulo relation} and Lemma \ref{toCalculateJetPolynomialsWhenSomeCoordinate=0},
we have
	\begin{alignat}{4}
f^{(2n+2+v)} &\ \equiv \ && f^{(2n+2+v)}_{j+u,2n+2-j-u,2} &&\ \mathrm{mod}\ L_{j+u,2n+2-j-u,2}\\
&\ =\ && g_{j+u,2}^{(v)}.
	\end{alignat}
for $v = 0,...,m - 2(n+1)$.
Then
	\begin{center}
$L_{j+u,2n+2-j-u,2} + \langle f^{(2n+2)},...,f^{(m)} \rangle = L_{j+u,2n+2-j-u,2} + \langle g^{(0)}_{j+u,2},...,g^{(m - 2n-2)}_{j+u,2} \rangle$.
	\end{center}
From Proposition \ref{irreducibility and irreducible decomposition of singular fiber},
the ideal $\langle g^{(0)}_{j+u,2},...,g^{(m - 2n-2)}_{j+u,2} \rangle$ is prime.
Moreover,
we have
	\begin{alignat}{3}
g^{(v)}_{j+u,2} &\ \in k[x_{j+u},...,x_{j+u+v},y_{2n+2-j-u},...,y_{2n+2-j-u+v},z_2,...,z_{2+v}]\\
 &\ \subseteq k[x_{j+u},...,x_{j+u+m-2n-2},y_{2n+2-j-u},...,y_{m-j-u},z_2,...,z_{m-2n}]
	\end{alignat}
for $v = 0,...,m - 2(n+1)$ from Remark \ref{Slide Polynomials and apeearing coordinates}.
In particular,
the variables appearing in $g^{(v)}_{j+u,2}$ are disjoint from $x_0,...,x_{j-1 + u},y_0,...,y_{2n-j+1-u}$ and $z_0, z_1$.
So the ideal
	\begin{center}
$L_{j+u,2n+2-j-u,2} + \langle g^{(0)}_{j+u,2},...,g^{(m - 2n-2)}_{j+u,2} \rangle$
	\end{center}
is also prime.
Hence the variety
	\begin{center}
$\mathbf{V}(L_{j+u,2n+2-j-u,2} + \langle g^{(0)}_{j+u,2},...,g^{(m - 2n-2)}_{j+u,2} \rangle) \cong X_{m-2(n+1)}\times \mathbb{A}^{4n+2}$
	\end{center}
in $\mathbb{A}^{3m+3}$ is irreducible.

Next we check for $u_1,u_2 \in \{0,...,n+1-(j-i)\}$ with $u_1 < u_2$,
that the prime ideals $L_{j+u_l,2n+2-j-u_l,2} + \langle g^{(0)}_{j+u_l,2},...,g^{(m - 2n-2)}_{j+u_l,2} \rangle$ ($l = 1,2$) are not contained in each other.
We have $x_{j + u_2 -1} \notin \langle g^{(0)}_{j+u_1,2},...,g^{(m - 2n-2)}_{j+u_1,2} \rangle$ since the ideal $\langle g^{(0)}_{j+u_1,2},...,g^{(m - 2n-2)}_{j+u_1,2} \rangle$ is reduced and the degree of all terms of the generators are at least $2$.
So we have
	\begin{alignat}{3}
&\ x_{j + u_2 -1} \notin&\ L_{j+u_1,2n+2-j-u_1,2} + \langle g^{(0)}_{j+u_1,2},...,g^{(m - 2n-2)}_{j+u_1,2} \rangle\\
\mathrm{and}\ &\ x_{j + u_2 -1} \in&\ L_{j+u_2,2n+2-j-u_2,2} + \langle g^{(0)}_{j+u_2,2},...,g^{(m - 2n-2)}_{j+u_2,2} \rangle.
	\end{alignat}
Similarly,
we can check that
	\begin{alignat}{3}
&\ y_{2n+1-j-u_1} \notin&\ L_{j+u_2,2n+2-j-u_2,2} + \langle g^{(0)}_{j+u_2,2},...,g^{(m - 2n-2)}_{j+u_2,2} \rangle\\
\mathrm{and}\ &\ y_{2n+1-j-u_1} \in&\ L_{j+u_1,2n+2-j-u_1,2} + \langle g^{(0)}_{j+u_1,2},...,g^{(m - 2n-2)}_{j+u_1,2} \rangle.
	\end{alignat}
Hence the decomposition
	\begin{center}
$Z_m^i \cap Z_m^j = \displaystyle  \bigcup_{u = 0}^{n+1 - (j-i)} \mathbf{V}(L_{j+u,2n+2-j-u,2} + \langle f^{(2n+2)},...,f^{(m)} \rangle)$
	\end{center}
is an irreducible decomposition.
	\end{proof}
	\begin{Coro} \label{dimensions of A_n-type}
Assume $m \geq n \geq 2$ and $1 \leq i < j \leq n$.
The intersections $Z_m^i \cap Z_m^j$ of irreducible components of singular fiber are of pure dimension,
and the following hold.
	\begin{itemize}
\item[(a)] If $m=n$,
then $\mathrm{dim}\ Z_n^i \cap Z_n^j = 2n - (j - i) +1$,
\item[(b)] Assume $m > n$.
		\begin{itemize}
	\item[{\rm(\hspace{.18em}i\hspace{.18em})}] If $m-n < j-i$,
	then $\mathrm{dim}\ Z_m^i \cap Z_m^j = 3m - n - (j - i)$,
	\item[{\rm(\hspace{.18em}ii\hspace{.18em})}] If $j-i \leq m-n$,
	then $\mathrm{dim}\ Z_m^i \cap Z_m^j = 2m$.
		\end{itemize}
	\end{itemize}
	\end{Coro}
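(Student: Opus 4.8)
The corollary is essentially a bookkeeping consequence of the irreducible decompositions in Theorem \ref{irreducible decomposition of intersection of A_n-type}. The key observation is that the ambient space $(\mathbb{A}^3)_m \cong \mathbb{A}^{3(m+1)}$ has dimension $3m+3$, and that for a coordinate ideal $L_{pqr}$ the variety $\mathbf{V}(L_{pqr})$ is an affine-linear subspace of codimension $p+q+r$, hence of dimension $3(m+1)-(p+q+r)$. The plan is to feed the ideals produced by Theorem \ref{irreducible decomposition of intersection of A_n-type} into this formula, observe that in each case all irreducible components share a common dimension, and conclude pure-dimensionality.

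For the irreducible cases this is immediate. In case (a) the ideal is $L_{j,n+1-i,1}$, with $p+q+r=j+(n+1-i)+1=n+2+(j-i)$, giving $\dim = 3(n+1)-(n+2+(j-i)) = 2n+1-(j-i)$. In case (b)(i), which is Theorem \ref{irreducible decomposition of intersection of A_n-type}(b) with $m-n<j-i$, the ideal is $L_{j,n+1-i,2}$, with $p+q+r=n+3+(j-i)$, giving $\dim = 3(m+1)-(n+3+(j-i)) = 3m-n-(j-i)$. Both intersections are irreducible, so trivially of pure dimension.

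Case (b)(ii), the range $j-i\le m-n$, is covered by Theorem \ref{irreducible decomposition of intersection of A_n-type}(b)--(d): the boundary $j-i=m-n$ falls under (b), the interior with $m<2n+2$ under (c), and $m\ge 2n+2$ under (d); note $m\ge 2n+2$ forces $m-n\ge n+2>n-1\ge j-i$, so $j-i\le m-n$ is automatic there. I expect every component to have dimension $2m$. At the boundary, the formula $3m-n-(j-i)$ from case (b)(i) specializes to $2m$ since $j-i=m-n$. In case (c) each component $\mathbf{V}(L_{j+u,m-j-u+1,2})$ has $p+q+r=(j+u)+(m-j-u+1)+2=m+3$, independent of $u$, so $\dim = 3(m+1)-(m+3)=2m$.

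It remains to treat case (d), where the components are not coordinate subspaces. By the proof of Theorem \ref{irreducible decomposition of intersection of A_n-type}(d) each component is isomorphic to $X_{m-2(n+1)}\times \mathbb{A}^{4n+2}$, so has dimension $\dim X_{m-2(n+1)}+4n+2$. The one genuinely nontrivial input I will need is the dimension of the jet scheme itself: I claim $\dim X_k = 2(k+1)$ for all $k\ge 0$. To see this, note that $X_k$ is irreducible by Proposition \ref{irreducibility and irreducible decomposition of singular fiber}, and it contains the nonempty open subset $\pi_{k,0}^{-1}(X\setminus\{0\})$, which is the jet scheme of the smooth surface $X\setminus\{0\}$ and therefore has dimension $2(k+1)$; density of this open subset in the irreducible $X_k$ forces $\dim X_k=2(k+1)$. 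Substituting $k=m-2(n+1)$ gives component dimension $2(m-2n-1)+4n+2=2m$, again independent of $u$. Thus in every subcase of (b)(ii) all components have the common dimension $2m$, so the intersection is of pure dimension $2m$. Together with the irreducible cases (a) and (b)(i), this proves the pure-dimensionality and all the dimension formulas. The main, and really the only, obstacle is this dimension count for $X_k$; everything else is arithmetic with the codimensions of coordinate ideals.
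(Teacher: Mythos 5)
Your proposal is correct, and its overall architecture coincides with the paper's: plug the decompositions of Theorem~\ref{irreducible decomposition of intersection of A_n-type} into codimension counts for the coordinate ideals in cases (a) and (b)(i), and use the product structure of the components for large $m$. But your handling of case (b)(ii) is genuinely different in one respect, and in fact more careful than the paper's. The paper disposes of (b)(ii) in one line by asserting that ``from (c) and (d) the irreducible component of $Z_m^i \cap Z_m^j$ is isomorphic to $X_{m-2(n+1)} \times \mathbb{A}^{4n+2}$''; this is literally accurate only in case (d), since in case (c) one has $m < 2n+2$, the symbol $X_{m-2(n+1)}$ is meaningless, and the components $\mathbf{V}(L_{j+u,m-j-u+1,2})$ are coordinate subspaces. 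Your direct count $p+q+r = (j+u)+(m-j-u+1)+2 = m+3$, independent of $u$, repairs this and gives dimension $2m$ for every component of (c) by elementary arithmetic. You also make explicit, with a short and correct proof (irreducibility of $X_k$ from Proposition~\ref{irreducibility and irreducible decomposition of singular fiber} plus density of the locally trivial $\mathbb{A}^{2k}$-bundle $\pi_{k,0}^{-1}(X \setminus \{0\})$ over the smooth locus), the input $\dim X_k = 2(k+1)$ that the paper uses silently in case (d); this fact is also contained in Musta\c{t}\v{a}'s results cited there, but your self-contained derivation costs nothing. Your observation that $m \geq 2n+2$ forces $j-i \leq n-1 < m-n$, so case (d) lies entirely within (b)(ii), and that the boundary $j-i = m-n$ is consistently covered by both formulas, ties up the case bookkeeping cleanly.
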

	\begin{proof}
(a)
From Proposition \ref{irreducible decomposition of intersection of A_n-type}(a),
the closed subvariety $Z_n^i \cap Z_n^j$ is defined by the ideal $I_n^{i,j} = L_{j,n+1-i,1} = \langle x_0,...,x_{j-1},y_0,...,y_{n-i},z_0 \rangle$ in $\mathbb{A}^{3(n+1)}$.
So
	\begin{center}
$\mathrm{dim}\ Z_n^i \cap Z_n^j = 3(n+1) - j - (n+1-i) -1 = 2n - (j-i) +1$.
	\end{center}
(b) {\rm(\hspace{.18em}i\hspace{.18em})}
From Proposition \ref{irreducible decomposition of intersection of A_n-type}(b),
the closed subvariety $Z_m^i \cap Z_m^j$ is defined by the ideal $I_m^{i,j} = L_{j,n-i+1,2} = \langle x_0,...,x_{j-1},y_0,...,y_{n-i},z_0,z_1 \rangle$ in $\mathbb{A}^{3(m+1)}$.
So
	\begin{center}
$\mathrm{dim}\ Z_m^i \cap Z_m^j = 3(m+1) - j - (n+1-i) -2 = 3m - n - (j-i)$.
	\end{center}
{\rm(\hspace{.18em}ii\hspace{.18em})}
From Proposition \ref{irreducible decomposition of intersection of A_n-type}(c) and (d),
the irreducible component of $Z_m^i \cap Z_m^j$ is isomorphic to $X_{m-2(n+1)} \times \mathbb{A}^{4n+2}$.
Thus
	\begin{center}
$\mathrm{dim}\ Z_m^i \cap Z_m^j = \mathrm{dim}\ X_{m-2(n+1)} \times \mathbb{A}^{4n+2} = 2(m-2n-1) + 4n+2 = 2m$.
	\end{center}
	\end{proof}
	\begin{Ex}
For an $A_3$-type singular surface,
Table \ref{table : dimensins and codimnsions} shows the dimensions and  codimensions of $Z_m^i$ and $Z_m^i \cap Z_m^j$ and the numbers of the irreducible components $N_m^{ij}$ of $Z_m^i \cap Z_m^j$ for small values of $m$.
	\begin{table}[htp]
		\caption{}
		\label{table : dimensins and codimnsions}
		\begin{tabular}{|c|c|c|c|c|c|c|c|c|c|}
			\hline
			 & \multicolumn{3}{|c|}{$\mathrm{dim}$} & \multicolumn{3}{|c|}{$\mathrm{codim}_{\mathbb{A}^{3(m+1)}}$} & \multicolumn{2}{|c|}{} \\
			\hline
			$m$ &\ \ $Z_m^i$\ \ & $Z_m^1 \cap Z_m^2$ & $Z_m^1 \cap Z_m^3$ &
			\ \ $Z_m^i$\ \ &
			$Z_m^1 \cap Z_m^2$ & $Z_m^1 \cap Z_m^3$ &\ \ $N_m^{12}$\ \ &\ \ $N_m^{13}$\ \ \\
			\hline
			3 & 7 & 6 & 5 & 5 & 6 & 7 & 1 & 1\\
			4 & 9 & 8 & 7 & 6 & 7 & 8 & 1 & 1\\
			5 & 11 & 10 & 10 & 7 & 8 & 8 & 2 & 1\\
			6 & 13 & 12 & 12 & 8 & 9 & 9 & 3 & 2\\
			7 & 15 & 14 & 14 & 9 & 10 & 10 & 4 & 3\\
			\hline
		\end{tabular}
	\end{table}
	\end{Ex}
	\begin{Coro} \label{maximal condition of A_n}
For $m \geq n$ and $i, j, k, l \in \{1,...,n\}$ with $i < j$ and $k < l$,
we have
	\begin{center}
$Z_m^i \cap Z_m^j \subset Z_m^k \cap Z_m^l$
	\end{center}
if and only if $i \leq k < l \leq j$.
In particular,
the intersection $Z_m^i \cap Z_m^j$ of the irreducible components of $X_m^0$ is maximal in $\{Z_m^i \cap Z_m^j \mid  i \neq j \}$ for the inclusion relation if and only if $| i - j | = 1$.
	\end{Coro}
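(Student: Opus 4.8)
The plan is to translate the geometric containment into an inclusion of radical ideals and then reduce it to a question about which coordinate functions lie in $I_m^{i,j}$. Over $\CC$, the containment $Z_m^i \cap Z_m^j \subseteq Z_m^k \cap Z_m^l$ is equivalent to the reverse inclusion of vanishing ideals $I_m^{k,l} \subseteq I_m^{i,j}$. Since $I_m^{k,l} = \sqrt{J_m^{k,l}}$ with $J_m^{k,l} = L_{l,n+1-k,1} + \langle f^{(0)},\dots,f^{(m)}\rangle$, while $I_m^{i,j}$ is radical, this is in turn equivalent to $J_m^{k,l} \subseteq I_m^{i,j}$. The polynomials $f^{(0)},\dots,f^{(m)}$ already lie in $J_m^{i,j} \subseteq I_m^{i,j}$, so the containment collapses to the purely linear condition
\[
L_{l,n+1-k,1} = \langle x_0,\dots,x_{l-1},\,y_0,\dots,y_{n-k},\,z_0\rangle \subseteq I_m^{i,j}.
\]
Everything therefore comes down to deciding, for each coordinate, whether it belongs to $I_m^{i,j}$.

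The key step is the following membership criterion, which I would prove uniformly from the irreducible decompositions in Theorem~\ref{irreducible decomposition of intersection of A_n-type}: in all four cases (a)--(d) one has
\[
x_a \in I_m^{i,j} \iff a \le j-1, \qquad y_b \in I_m^{i,j} \iff b \le n-i,
\]
and $z_0 \in I_m^{i,j}$ always. To see this, write $I_m^{i,j} = \bigcap_u P_u$ as the intersection of the minimal primes listed in the theorem. Each $P_u$ has the shape $L_{c_u,d_u,\ast} + \langle h_1,\dots,h_s\rangle$, where the extra generators $h_t$ (certain polynomials $g^{(v)}_{j+u,2}$ in case (d), and none at all in cases (a)--(c)) all lie in the square $\mathfrak{m}^2$ of the maximal ideal at the origin and involve only coordinates disjoint from those of $L_{c_u,d_u,\ast}$ by Remark~\ref{Slide Polynomials and apeearing coordinates}. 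Since a nonzero linear form cannot belong to $\mathfrak{m}^2$, it follows that $x_a \in P_u \iff a < c_u$ and $y_b \in P_u \iff b < d_u$, whence $x_a \in I_m^{i,j} \iff a < \min_u c_u$ and $y_b \in I_m^{i,j} \iff b < \min_u d_u$. A direct inspection of the four decompositions then shows that the smallest $x$-cutoff is always attained by the first component and equals $j$, while the smallest $y$-cutoff is always attained by the last component and equals $n+1-i$; I would record these two elementary computations case by case, noting in addition that $z_0 \in L_{j,n+1-i,1} \subseteq I_m^{i,j}$.

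Granting the criterion, the linear containment $L_{l,n+1-k,1} \subseteq I_m^{i,j}$ holds precisely when $l-1 \le j-1$ and $n-k \le n-i$, that is, when $l \le j$ and $i \le k$; combined with the standing hypotheses $i<j$ and $k<l$ this is exactly $i \le k < l \le j$, which proves the main equivalence (reading $\subset$ as $\subseteq$). For the final assertion, if $|i-j|=1$ then the only pair $(k,l)$ with $i \le k < l \le j$ is $(i,j)$ itself, so $Z_m^i \cap Z_m^j$ lies in no strictly larger member of the family and is maximal; conversely, if $j-i \ge 2$ then $i \le i < i+1 \le j$ yields $Z_m^i \cap Z_m^j \subseteq Z_m^i \cap Z_m^{i+1}$, and since $j \not\le i+1$ the reverse inclusion fails, so the containment is strict and $Z_m^i \cap Z_m^j$ is not maximal. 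The main obstacle is the membership criterion: one must verify that across the qualitatively different decompositions (a)--(d) the minimal cutoffs are genuinely $j$ and $n+1-i$, and that the higher-degree generators of case (d) never absorb a coordinate variable—this is exactly where the $\mathfrak{m}^2$ observation and Remark~\ref{Slide Polynomials and apeearing coordinates} do the real work.
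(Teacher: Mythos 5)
Your proposal is correct, and it follows exactly the route the paper intends: the paper states this corollary without proof as an immediate consequence of Theorem~\ref{irreducible decomposition of intersection of A_n-type}, and your argument supplies the missing derivation in the natural way (Nullstellensatz reduction to $J_m^{k,l} \subseteq I_m^{i,j}$, then a coordinate-membership criterion read off from the decompositions in cases (a)--(d), with the $\mathfrak{m}^2$/disjoint-variables observation in case (d) mirroring the paper's own reasoning inside the theorem's proof). The cutoff computations ($\min_u (j+u) = j$ and $\min_u$ of the $y$-cutoff equal to $n+1-i$ in all cases) and the strictness argument in the maximality claim all check out.
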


Now we define a graph $\Gamma$ using the information on $X_m^0$ as follows.
	\begin{Const}\label{ConstructionofGraph}
The graph $\Gamma$ is constructed pairs $(V, E)$ as follows.
	\begin{itemize}
\item The vertices V are the irreducible components of $X_m^0$.
\item The edges E are all maximal elements of $\{ Z_m^i \cap Z_m^j \mid i \neq j \}$,
and $Z_m^i \cap Z_m^j$ connects $Z_m^i$ and $Z_m^j$.
	\end{itemize}
In other words,
an edge is given between $Z_m^i$ and $Z_m^j$ if and only if $Z_m^i \cap Z_m^j \in E$.
 	\end{Const}
	\begin{Coro}
The graph obtained by Construction \ref{ConstructionofGraph} is isomorphic to the resolution graph of an $A_n$-type singularity.
	\end{Coro}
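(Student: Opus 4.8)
The plan is to read off the graph $\Gamma$ directly from the two structural results already established and then match it against the classical description of the resolution graph of an $A_n$-type singularity. First I would recall from Proposition \ref{irreducibility and irreducible decomposition of singular fiber} that the singular fiber decomposes as $X_m^0 = \bigcup_{l=1}^n Z_m^l$ into exactly $n$ irreducible components, so the vertex set of $\Gamma$ is $V = \{Z_m^1, \ldots, Z_m^n\}$, a set of cardinality $n$. This already matches the vertex count of the minimal resolution of an $A_n$ singularity, whose exceptional divisor consists of $n$ curves.

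Next I would determine the edges. By Corollary \ref{maximal condition of A_n}, for $i < j$ the intersection $Z_m^i \cap Z_m^j$ is maximal in $\{Z_m^i \cap Z_m^j \mid i \neq j\}$ for the inclusion relation if and only if $|i-j| = 1$. Hence the edge set $E$ consists precisely of the intersections $Z_m^i \cap Z_m^{i+1}$ for $i = 1, \ldots, n-1$, and, since these subvarieties are defined by the distinct ideals $L_{i+1,\,n+1-i,\,\bullet}$ and are therefore pairwise distinct, each element of $E$ determines a unique pair and joins the consecutive vertices $Z_m^i$ and $Z_m^{i+1}$. Therefore $\Gamma$ is the path graph on the $n$ vertices $Z_m^1, \ldots, Z_m^n$, with $Z_m^i$ adjacent to $Z_m^{i+1}$ and no other adjacencies.

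Finally I would compare with the resolution graph. It is classical that the minimal resolution of an $A_n$-type surface singularity has exceptional divisor consisting of $n$ smooth rational curves $E_1, \ldots, E_n$, each with self-intersection $-2$, such that $E_i \cdot E_{i+1} = 1$ for $1 \leq i \leq n-1$ and $E_i \cdot E_j = 0$ whenever $|i-j| \geq 2$; thus the dual graph is the linear chain, i.e., the Dynkin diagram of type $A_n$. The map sending $Z_m^i$ to $E_i$ is then a bijection on vertices carrying edges to edges and non-edges to non-edges, hence a graph isomorphism.

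The argument is essentially immediate given Corollary \ref{maximal condition of A_n}; the only point requiring any care is the labelling, namely checking that the indexing of the components $Z_m^l$ is compatible with a standard ordering of the exceptional curves $E_l$, so that adjacency $|i-j|=1$ on one side corresponds to adjacency on the other. Since both graphs are simple paths on $n$ vertices, however, any bijection respecting the path structure serves as an isomorphism, and no genuine obstacle arises.
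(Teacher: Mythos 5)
Your proof is correct and takes essentially the same route the paper intends: the paper states this corollary without any proof, as an immediate consequence of Corollary \ref{maximal condition of A_n} (maximality of $Z_m^i \cap Z_m^j$ iff $|i-j|=1$) together with Proposition \ref{irreducibility and irreducible decomposition of singular fiber} (exactly $n$ irreducible components), which is precisely your identification of $\Gamma$ as the path graph on $n$ vertices matching the $A_n$ resolution graph. The small extra details you supply --- pairwise distinctness of the edge intersections (which also follows directly from the inclusion criterion $i \leq k < l \leq j$ in Corollary \ref{maximal condition of A_n}) and the compatibility of labellings --- are routine and fill in exactly what the paper leaves implicit.
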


	\section{Intersections of irreducible components of the singular fiber of a jet scheme : $D_4$ case}

From the result for $A_n$-type singular surfaces in the previous section,
we expect the following:
If $X$ is a surface over $\CC$ with a rational double point singularity,
the graph constructed in Construction \ref{ConstructiontheGraph} is isomorphic to the resolution graph of $X$ for $m \gg 0$.
In this section,
we show that this holds in the case $X$ has a $D_4$-type singular point.

In \cite{M2},
Mourtada describes how to obtain the irreducible components of the singular fiber of the jet scheme of $X$.
In this case,
unlike the case of $A_n$-type singularities,
we do not know the generators of the defining ideals of the irreducible components when the order $m$ is large.

Let $f(x,y,z) = x^2 - y^2z + z^3 \in \CC[x,y,z]$ and $X \subset \mathbb{A}^3$ the hypersurface defined by $f$.
For any $m \in \mathbb{Z}_{\geq 0}$,
let $R_m := \CC[x_0,...,x_m,y_0,...,y_m,z_0,...,z_m]$ and $\pi_m : X_m \rightarrow X$ be the truncation morphism.
We denote the origin $x = y = z = 0$ of $\mathbb{A}^3$ by $0$.
Then the surface $X$ has a singular point at $0$.
Let us denote the singular fiber $\pi^{-1}_m(0)$ by $X_m^0$.
We fix some more notations.

	\begin{Nota} \label{about Jet polynomials}
Assume $m \geq 5$.
We define a number of ideals of $R_m$ which are defining ideals of subvarieties of $X_m^0$:
	\begin{alignat}{1}
L_{pqr} = \langle x_0,...,x_{p-1},y_0,...,y_{q-1},z_0,...,z_{r-1} \rangle \label{mainPartI_m^0}
	\end{alignat}
as in section 2,
	\begin{alignat}{6}
L^1
\quad =&\quad \langle x_0,x_1, &&\, \; y_0, &&\, \; z_0,z_1 && &\rangle &\, \; = L_{212}, \label{mainPartI_m^1}\\
L^2
\quad =&\quad \langle x_0,x_1, &&\, \; y_0, &&\, \; z_0, &&\, \; y_1 - z_1 &\rangle, \label{mainPartI_m^2}\\
L^3
\quad =&\quad \langle x_0,x_1, &&\, \; y_0, &&\, \; z_0, &&\, \; y_1 + z_1 &\rangle, \label{mainPartI_m^3}
	\end{alignat}
and
	\begin{alignat}{5}
I_m^0
 &\ \ =\ \ &&L_{322} &\ + \langle f^{(0)},...,f^{(m)} \rangle, \label{PrimeIdealV_m^0}\\
J_m^1
 &\ \ =\ \ &&L^1 &\ + \langle f^{(0)},...,f^{(m)} \rangle, \label{OpenPrimeIdealV_m^1}\\
J_m^2
 &\ \ =\ \ &&L^2 &\ + \langle f^{(0)},...,f^{(m)} \rangle, \label{OpenPrimeIdealV_m^2}\\
J_m^3
 &\ \ =\ \ &&L^3 &\ + \langle f^{(0)},...,f^{(m)} \rangle. \label{OpenPrimeIdealV_m^3}
	\end{alignat}
	\end{Nota}
	\begin{Prop} (\cite[Section 3.2]{M2}) \label{J_m^i*(R_m)_{y_1} are not equal to (R_m)_{y_1}}
The ideal $I_m^0$ is a prime ideal in $R_m$,
and the ideals $J_m^i\cdot (R_m)_{y_1}$ for $i = 1,2,3$ are prime ideals in $(R_m)_{y_1}$.
In particular,
$\mathbf{V}(J_m^{i}) \cap \mathbf{D}(y_1)$ are irreducible subvarieties of $\mathbf{D}(y_1)$,
where $\mathbf{D}(y_1)$ is the open subscheme of $X_m$ defined by $y_1 \neq 0$.
	\end{Prop}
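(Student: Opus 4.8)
The plan is to prove the two assertions separately, since they rest on the same basic computation — expanding $f(\mathbf{x},\mathbf{y},\mathbf{z})$ with $\mathbf{x}=\sum x_it^i$, $\mathbf{y}=\sum y_it^i$, $\mathbf{z}=\sum z_it^i$ and reading off $f^{(j)}$ modulo the relevant ideal — but use it in two different ways. The one fact I will lean on repeatedly is that $f=x^2-y^2z+z^3$ is weighted homogeneous of degree $6$ for the weights $\mathrm{wt}(x)=3$, $\mathrm{wt}(y)=\mathrm{wt}(z)=2$, so that raising the orders of vanishing of $\mathbf{x},\mathbf{y},\mathbf{z}$ pushes up the order of $f(\mathbf{x},\mathbf{y},\mathbf{z})$ in a controlled way.

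For the primeness of $I_m^0$ I would argue exactly as in the $A_n$ case. Working modulo $L_{322}=\langle x_0,x_1,x_2,y_0,y_1,z_0,z_1\rangle$ forces $\mathbf{x}$ to have order $\ge 3$ and $\mathbf{y},\mathbf{z}$ order $\ge 2$, so by weighted homogeneity $f(\mathbf{x},\mathbf{y},\mathbf{z})$ has order $\ge 6$; hence $f^{(j)}\equiv 0 \bmod L_{322}$ for $j\le 5$, and factoring out $t^6$ gives, for $k\ge 0$,
\[
f^{(6+k)}\ \equiv\ f^{(k)}(x_3,\dots,x_{3+k},y_2,\dots,y_{2+k},z_2,\dots,z_{2+k})\ \bmod L_{322},
\]
the exact $D_4$-analogue of Lemma \ref{toCalculateJetPolynomialsWhenSomeCoordinate=0}. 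Therefore, after the relabeling $x_{3+i}\mapsto\xi_i,\ y_{2+i}\mapsto\eta_i,\ z_{2+i}\mapsto\zeta_i$, the ideal $\langle f^{(6)},\dots,f^{(m)}\rangle$ becomes the defining ideal $\langle f^{(0)},\dots,f^{(m-6)}\rangle$ of the jet scheme $X_{m-6}$ of $X$ in the variables $\xi_\bullet,\eta_\bullet,\zeta_\bullet$, while the top variables $x_{m-2},\dots,x_m,\ y_{m-3},\dots,y_m,\ z_{m-3},\dots,z_m$ do not occur. This exhibits $R_m/I_m^0$ as the coordinate ring of $X_{m-6}$ tensored with a polynomial ring in the free top variables. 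Since $X$ is a hypersurface with a rational double point, $X_{m-6}$ is a complete intersection that is irreducible and generically reduced, hence integral, by the same results of Mustat\u{a} (\cite{Mu}) already used for the $A_n$ surface in Proposition \ref{irreducibility and irreducible decomposition of singular fiber}; so $R_m/I_m^0$ is a domain and $I_m^0$ is prime. (The case $m=5$ is immediate, as then $I_5^0=L_{322}$.)

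For the ideals $J_m^i$ I would pass to $(R_m)_{y_1}$ and eliminate variables by a triangular system. Reducing modulo $L^i$ makes $\mathbf{x}$ have order $\ge 2$ and $\mathbf{y}$ order $\ge 1$, and one checks that $f^{(j)}\equiv 0$ for $j\le 3$. For each $j$ with $4\le j\le m$, the coefficient of $z_{j-2}$ in $f^{(j)}\bmod L^i$ is an invertible multiple of $y_1^2$ — namely $-y_1^2$ for $L^1$ (where $z_1=0$) and $2y_1^2$ for $L^2,L^3$ (where $z_1=\pm y_1$ and the $-\mathbf{y}^2\mathbf{z}$ and $\mathbf{z}^3$ contributions combine as $-y_1^2+3y_1^2$) — while every other monomial of $f^{(j)}$ involves only $x_\bullet,y_\bullet$ and $z_2,\dots,z_{j-3}$. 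Thus, after inverting $y_1$, the equations $f^{(4)}=\dots=f^{(m)}=0$ solve successively for $z_2,z_3,\dots,z_{m-2}$ in terms of the remaining variables, and $(R_m)_{y_1}/J_m^i(R_m)_{y_1}$ is isomorphic to a localization of a polynomial ring in $x_2,\dots,x_m,\ y_1,\dots,y_m,\ z_{m-1},z_m$. A localization of a polynomial ring is a domain, so each $J_m^i(R_m)_{y_1}$ is prime, and $\mathbf{V}(J_m^i)\cap\mathbf{D}(y_1)$ is irreducible.

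The main work is the bookkeeping that makes the elimination triangular: one must verify that $z_{j-2}$ is the unique highest-index $z$ occurring in $f^{(j)}\bmod L^i$ and that its coefficient is a unit after inverting $y_1$. For $L^1$ this is clean, but for $L^2$ and $L^3$ the leading contributions of $-\mathbf{y}^2\mathbf{z}$ and of $\mathbf{z}^3$ to the coefficient of $z_{j-2}$ both survive (because $z_1=\pm y_1\neq 0$) and must be combined carefully; this partial cancellation, and the resulting switch of the leading coefficient from $-y_1^2$ to $2y_1^2$, is the only delicate point, and it is exactly why one must localize at $y_1$ rather than work in $R_m$ directly. The remaining input — integrality of the auxiliary jet scheme $X_{m-6}$ in the $I_m^0$ argument — is supplied by the results of Mustat\u{a} (\cite{Mu}) invoked above.
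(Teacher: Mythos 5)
Your argument is correct, but a point of comparison first: the paper does not actually prove this proposition at all --- it is imported verbatim from Mourtada \cite{M2} (Section 3.2), so your proposal is being measured against the source rather than against an in-paper argument, and what you have written is in effect a self-contained reconstruction of Mourtada's computation. Both halves check out. For $I_m^0$: modulo $L_{322}$ the jets $\mathbf{x},\mathbf{y},\mathbf{z}$ have orders $\geq 3,2,2$, and since $f=x^2-y^2z+z^3$ satisfies $f(\lambda^3x,\lambda^2y,\lambda^2z)=\lambda^6f(x,y,z)$, one gets $f^{(j)}\equiv 0 \bmod L_{322}$ for $j\leq 5$ and the shift formula $f^{(6+k)}\equiv f^{(k)}(x_3,\dots,x_{3+k},y_2,\dots,y_{2+k},z_2,\dots,z_{2+k})$, which is precisely the $D_4$-analogue of Lemma \ref{toCalculateJetPolynomialsWhenSomeCoordinate=0}; your identification of $R_m/I_m^0$ with $\Gamma(X_{m-6},\mathscr{O}_{X_{m-6}})$ tensored with a polynomial ring in the free top variables is right (the counts $x_{m-2},\dots,x_m$, $y_{m-3},\dots,y_m$, $z_{m-3},\dots,z_m$ are correct, and they reproduce $\dim Z_m^0=2(m-5)+11=2m+1$, consistent with Proposition \ref{Irreducible decomposition of singular fiber of D_4-type singular point}), and the appeal to Musta\c{t}\v{a} \cite{Mu} for primality of $\langle f^{(0)},\dots,f^{(m-6)}\rangle$ is exactly the input the paper itself uses for $A_n$ in Proposition \ref{irreducibility and irreducible decomposition of singular fiber}; since $x^2-y^2z+z^3$ defines a rational double point, that input applies verbatim. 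For $J_m^i$: I verified your coefficient claims. Modulo $L^1$ the monomials of $f^{(j)}$ containing $z_{j-2}$ come only from $-\mathbf{y}^2\mathbf{z}$ (coefficient $-y_1^2$), since a contribution $z_az_bz_{j-2}$ from $\mathbf{z}^3$ would need $a+b=2$ with $a,b\geq 2$; modulo $L^2$ or $L^3$, with $z_1\equiv\pm y_1$, the term $3z_1^2z_{j-2}=3y_1^2z_{j-2}$ survives and the total is $-y_1^2+3y_1^2=2y_1^2$ as you say, with all other monomials involving only $z_2,\dots,z_{j-3}$ and the $x,y$ variables. One small point you gloss over with ``one checks that $f^{(j)}\equiv 0$ for $j\leq 3$'': for $i=2,3$ this already uses the same cancellation, namely $f^{(3)}\equiv -y_1^2z_1+z_1^3=z_1(z_1-y_1)(z_1+y_1)\equiv 0 \bmod (y_1\mp z_1)$, which is exactly where the three components $Z_m^1,Z_m^2,Z_m^3$ separate --- worth making explicit, but true. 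With that, the $m-3$ equations $f^{(4)},\dots,f^{(m)}$ triangularly eliminate the $m-3$ variables $z_2,\dots,z_{m-2}$ after inverting $y_1$, the quotient is a localization of $\CC[x_2,\dots,x_m,y_1,\dots,y_m,z_{m-1},z_m]$, hence a domain, and this again reconfirms $\dim Z_m^i=(m-1)+m+2=2m+1$. So the proposal is a valid and complete proof of the quoted proposition, following the same two ideas (weighted homogeneity plus triangular elimination in the chart $y_1\neq 0$) as the original source.
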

From these,
we define some irreducible subvarieties of $X_m$ contained in $X_m^0$.
 	\begin{Def} \label{definition of irreducible components of X_m^0 for D_4-type singular surface}
Let $Z_m^0,...,Z_m^3$ be the closed subvarieties of $X_m$,
contained in $X_m^0$,
defined by
	\begin{center}
$Z_m^0 := \mathbf{V}(I_m^0)$,
	\end{center}
and for $i \in \{1,2,3\}$,
	\begin{center}
$Z_m^i := \overline{\mathbf{V}(J_m^{i}) \cap \mathbf{D}(y_1)}$,
	\end{center}
where the bar means the Zariski closure in $X_m$.
	\end{Def}

The defining ideals of $Z_m^i$ are obtained as follows.
	\begin{Def} \label{definition of Z_m^i}
For $i = 1,2,3$,
we define the ideals $I_m^i$ by
	\begin{center}
$I_m^i := J_m^i\cdot (R_m)_{y_1} \cap R_m$.
	\end{center}
	\end{Def}
	\begin{Lemma} (\cite[Section 3.2]{M2}) \label{Zariski closure}
Assume $m \geq 5$.
The ideals $I_m^i$ are prime ideals in $R_m$ and $Z_m^i = \mathbf{V}(I_m^i)$.
	\end{Lemma}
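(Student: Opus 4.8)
The plan is to obtain both assertions as formal consequences of the behaviour of prime ideals under localization, applied to the canonical map $\rho : R_m \to (R_m)_{y_1}$. By Definition~\ref{definition of Z_m^i}, the ideal $I_m^i$ is exactly the contracted ideal $\rho^{-1}\bigl(J_m^i\cdot(R_m)_{y_1}\bigr) = J_m^i\cdot(R_m)_{y_1}\cap R_m$, so it suffices to understand this contraction.

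For primality, I would invoke Proposition~\ref{J_m^i*(R_m)_{y_1} are not equal to (R_m)_{y_1}}, which asserts that $J_m^i\cdot(R_m)_{y_1}$ is a prime ideal of $(R_m)_{y_1}$ (in particular a proper ideal) for $i = 1,2,3$ and $m \geq 5$. Since the preimage of a prime ideal under a ring homomorphism is again prime, $I_m^i = \rho^{-1}\bigl(J_m^i\cdot(R_m)_{y_1}\bigr)$ is prime in $R_m$; it is proper because $\rho(1) = 1 \notin J_m^i\cdot(R_m)_{y_1}$. This disposes of the first claim.

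For the equality $Z_m^i = \mathbf{V}(I_m^i)$, I would first identify $I_m^i$ with the saturation $J_m^i : y_1^\infty = \{\, h \in R_m \mid y_1^N h \in J_m^i \text{ for some } N \geq 0 \,\}$. This is a one-line manipulation of fractions using that $R_m$ is a domain: an element $h \in R_m$ lies in $J_m^i\cdot(R_m)_{y_1}\cap R_m$ precisely when $h = a/y_1^N$ in $(R_m)_{y_1}$ for some $a \in J_m^i$, which by injectivity of $\rho$ forces $y_1^N h = a \in J_m^i$. I would then apply the standard fact that, for any ideal $\mathfrak{a}$ and element $g$ of $R_m$, the Zariski closure of the locally closed set $\mathbf{V}(\mathfrak{a})\cap\mathbf{D}(g)$ is $\mathbf{V}(\mathfrak{a}:g^\infty)$. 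Taking $\mathfrak{a} = J_m^i$ and $g = y_1$ and recalling Definition~\ref{definition of irreducible components of X_m^0 for D_4-type singular surface} gives $Z_m^i = \overline{\mathbf{V}(J_m^i)\cap\mathbf{D}(y_1)} = \mathbf{V}(J_m^i : y_1^\infty) = \mathbf{V}(I_m^i)$.

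To justify the closure formula I would argue set-theoretically: the closure of a subset $S \subseteq \Spec R_m$ is $\mathbf{V}\bigl(\bigcap_{\mathfrak{p}\in S}\mathfrak{p}\bigr)$, and for $S = \mathbf{V}(J_m^i)\cap\mathbf{D}(y_1)$ the intersection of the members of $S$ matches, via the correspondence between primes of $(R_m)_{y_1}$ and primes of $R_m$ avoiding $y_1$, the contraction $\sqrt{J_m^i\cdot(R_m)_{y_1}}\cap R_m = \sqrt{I_m^i} = I_m^i$, the last step using the primality already established. Since the argument reduces entirely to these standard facts, I do not expect a genuine obstacle; the only point demanding care is the bookkeeping between $R_m$ and its localization $(R_m)_{y_1}$, together with the essential input of Proposition~\ref{J_m^i*(R_m)_{y_1} are not equal to (R_m)_{y_1}} guaranteeing that the extended ideals are prime and proper.
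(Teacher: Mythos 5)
Your proof is correct, but note that the paper itself gives no proof of this lemma: it is stated with a citation to \cite[Section 3.2]{M2}, so there is no internal argument to compare against, and your write-up supplies exactly the standard commutative-algebra content that the citation presupposes. Both halves check out. Primality of $I_m^i$ follows, as you say, from primality of the extended ideal $J_m^i\cdot(R_m)_{y_1}$ (the proposition the paper quotes from \cite{M2}) because contractions of prime ideals along ring homomorphisms are prime; properness is automatic since a prime ideal is proper by convention. For the equality $Z_m^i = \mathbf{V}(I_m^i)$, your identification $I_m^i = J_m^i : y_1^{\infty}$ is valid precisely because $R_m$ is a domain, so equality of fractions $h/1 = a/y_1^N$ in $(R_m)_{y_1}$ really does force $y_1^N h = a$ in $R_m$ with no torsion correction, and the set-theoretic closure formula $\overline{\mathbf{V}(\mathfrak{a})\cap\mathbf{D}(g)} = \mathbf{V}(\mathfrak{a}:g^{\infty})$ is the standard fact you cite; your final paragraph justifying it via the prime correspondence for localizations, using that contraction commutes with radicals and intersections, is sound (indeed, since $J_m^i\cdot(R_m)_{y_1}$ is prime, one could shortcut the saturation bookkeeping: the locally closed set is homeomorphic to the irreducible closed set $\mathbf{V}(J_m^i\cdot(R_m)_{y_1})$ in $\Spec (R_m)_{y_1}$, whose closure in $\Spec R_m$ is $\mathbf{V}$ of the contraction). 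Two points would deserve a sentence in a fully written-out version: the closure in the paper's definition of $Z_m^i$ is taken inside $X_m$, but since $J_m^i \supseteq \langle f^{(0)},\dots,f^{(m)}\rangle$, the set $\mathbf{V}(J_m^i)\cap\mathbf{D}(y_1)$ already lies in the closed subset $X_m$, so closures in $X_m$ and in $\mathbb{A}^{3(m+1)}$ coincide; and your argument runs in $\Spec R_m$ while the paper works with closed points of varieties, which agree here because finitely generated $\CC$-algebras are Jacobson, so passing between the two settings loses nothing.
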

 The singular fiber $X_m^0$ decomposes as follows.
 	\begin{Prop} (\cite[Section 3.2]{M2}) \label{Irreducible decomposition of singular fiber of D_4-type singular point}
Assume $m \geq 5$.
The irreducible decomposition of the singular fiber $X_m^0$ is given by
	\begin{center}
$X_m^0 = Z_m^0 \cup Z_m^1 \cup Z_m^2 \cup Z_m^3$.
	\end{center}
Moreover,
the dimensions of $Z_m^i$ are equal to $2m+1$ for $i = 0,...,3$.
	\end{Prop}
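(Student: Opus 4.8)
The plan is to analyze $X_m^0$ by splitting it according to whether the coordinate $y_1$ vanishes, exploiting that the low-order jet equations already pin down the coarse structure of the fiber. Writing $f^{(j)}$ for the jet polynomials of $f = x^2 - y^2z + z^3$, on $X_m^0$ we have $x_0 = y_0 = z_0 = 0$ by definition, and a direct expansion gives $f^{(2)} \equiv x_1^2$ and $f^{(3)} \equiv -y_1^2 z_1 + z_1^3 = z_1(z_1 - y_1)(z_1 + y_1)$ modulo $\langle x_0, x_1, y_0, z_0 \rangle$. Hence, set-theoretically, every point of $X_m^0$ satisfies $x_1 = 0$ and $z_1(z_1 - y_1)(z_1 + y_1) = 0$. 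First I would record these as the starting constraints and then stratify.

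On the open set $\mathbf{D}(y_1)$ the cubic $z_1(z_1 - y_1)(z_1 + y_1) = 0$ forces $z_1 \in \{0, y_1, -y_1\}$, and these three alternatives correspond exactly to the ideals $L^1, L^2, L^3$ of Notation \ref{about Jet polynomials}; combined with $x_1 = 0$ and the remaining equations $f^{(j)} = 0$, I would check that $X_m^0 \cap \mathbf{D}(y_1) = \bigcup_{i=1}^3 \mathbf{V}(J_m^i) \cap \mathbf{D}(y_1)$. On the complement $\{y_1 = 0\}$, the equation $f^{(3)} \equiv z_1^3$ forces $z_1 = 0$ and then $f^{(4)} \equiv x_2^2$ forces $x_2 = 0$, so all generators of $L_{322}$ vanish and $X_m^0 \cap \{y_1 = 0\} = \mathbf{V}(I_m^0) = Z_m^0$. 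Taking Zariski closures and using $\overline{A \cup B} = \overline A \cup \overline B$ gives $X_m^0 = Z_m^0 \cup Z_m^1 \cup Z_m^2 \cup Z_m^3$. Each $Z_m^i$ is irreducible by the primality statements already proved (the Proposition on $I_m^0$ and on $J_m^i(R_m)_{y_1}$, together with Lemma \ref{Zariski closure}). To confirm this is the irreducible decomposition I would verify pairwise non-containment: $Z_m^0$ lies in $\{y_1 = 0\}$ while the others meet $\mathbf{D}(y_1)$, and $Z_m^1, Z_m^2, Z_m^3$ are separated by the distinct generic values $z_1 = 0, y_1, -y_1$; once the dimensions are shown equal, no proper inclusion among distinct irreducibles can occur.

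For the dimension of $Z_m^i$ with $i \in \{1,2,3\}$ I would work on $\mathbf{D}(y_1)$, where $y_1$ is a unit. Expanding $f^{(j)}$ modulo $L^1$ (resp. $L^2, L^3$), the top $z$-index term of $f^{(j)}$ is $-y_1^2 z_{j-2}$ (resp. $2y_1^2 z_{j-2}$), whose coefficient is a unit on $\mathbf{D}(y_1)$; hence $f^{(4)}, \dots, f^{(m)}$ triangularly solve $z_2, \dots, z_{m-2}$, exhibiting $\mathbf{V}(J_m^i) \cap \mathbf{D}(y_1)$ as an open subset of the affine space with free coordinates $x_2, \dots, x_m, y_1, \dots, y_m, z_{m-1}, z_m$, of dimension $(m-1) + m + 2 = 2m + 1$. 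Moreover the three branches are permuted by the $S_3$-symmetry of the three lines $z = 0, z = \pm y$ in $f = x^2 + z(z-y)(z+y)$, so a single such computation suffices.

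For $Z_m^0 = \mathbf{V}(I_m^0)$ I would exploit that $f$ is weighted homogeneous of degree $6$ with weights $(3,2,2)$ on $(x,y,z)$. As in Lemma \ref{toCalculateJetPolynomialsWhenSomeCoordinate=0} of the $A_n$ section, this yields $f^{(6+v)} \equiv f^{(v)}(x_3,\dots; y_2,\dots; z_2,\dots) \bmod L_{322}$, while $f^{(j)} \equiv 0$ for $j \le 5$; consequently $Z_m^0 \cong X_{m-6} \times \mathbb{A}^{11}$, the affine factor collecting the free tail coordinates. Invoking that the jet schemes of the rational double point $X$ are irreducible of dimension $\dim X_k = 2k + 2$ (as in the results quoted for the $A_n$ case) then gives $\dim Z_m^0 = 2(m-6) + 2 + 11 = 2m + 1$. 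The main obstacle I anticipate is precisely this last step: setting up the weighted-homogeneous shift cleanly and justifying the product decomposition $Z_m^0 \cong X_{m-6} \times \mathbb{A}^{11}$ together with the input $\dim X_{m-6} = 2(m-6)+2$, which rests on the irreducibility of the jet schemes of the $D_4$ surface; by contrast, the set-theoretic decomposition and the dimensions of the three open branches are comparatively routine once the triangular elimination is carried out.
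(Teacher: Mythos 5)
Your proposal is correct, but note that the paper itself offers no proof of this proposition: it is imported wholesale from Mourtada \cite[Section 3.2]{M2} (with the dimension input ultimately resting on Musta\c{t}\v{a}'s irreducibility theorem \cite{Mu}), so the comparison is with Mourtada's argument rather than with anything in this text. Your reconstruction follows the same strategy as the source: stratify $X_m^0$ by $y_1 \neq 0$ versus $y_1 = 0$, observe that $f^{(2)} \equiv x_1^2$ and $f^{(3)} \equiv z_1(z_1-y_1)(z_1+y_1)$ force the three branches $L^1, L^2, L^3$ on $\mathbf{D}(y_1)$ and force $L_{322}$ on $\{y_1=0\}$, take closures, and get irreducibility from the primality statements the paper does quote (the Proposition on $I_m^0$ and $J_m^i(R_m)_{y_1}$, plus Lemma \ref{Zariski closure}). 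Your dimension counts check out: on $\mathbf{D}(y_1)$ the equations $f^{(4)},\dots,f^{(m)}$ eliminate $z_2,\dots,z_{m-2}$ triangularly (one gloss: on the $L^2, L^3$ branches $f^{(j)}$ also contains the linear term $\mp 2y_1^2 y_{j-2}$ coming from $y_1y_{j-2}z_1$ with $z_1 = \pm y_1$, which is harmless since the $y$'s stay free and the coefficient $2y_1^2$ of $z_{j-2}$ is still a unit), giving $2m+1$; and the weighted-homogeneity shift with weights $(3,2,2)$ and degree $6$ correctly yields $I_m^0 = L_{322} + \langle g^{(0)},\dots,g^{(m-6)}\rangle$ in disjoint variables, hence $Z_m^0 \cong X_{m-6}\times\mathbb{A}^{11}$ and $\dim Z_m^0 = 2(m-6)+2+11 = 2m+1$, exactly parallel to the $A_n$ computation via Lemma \ref{toCalculateJetPolynomialsWhenSomeCoordinate=0} and Proposition \ref{irreducibility and irreducible decomposition of singular fiber}. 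Two trivia worth recording: for $m = 5$ your product formula degenerates to $Z_5^0 = \mathbf{V}(L_{322}) \cong \mathbb{A}^{11}$ (interpret $X_{-1}$ as a point), and your final non-containment step is sound since four pairwise distinct irreducible varieties of equal dimension admit no proper inclusions, with distinctness of $Z_m^1, Z_m^2, Z_m^3$ guaranteed by the pairwise disjoint loci $z_1 = 0, y_1, -y_1$ on $\mathbf{D}(y_1)$ and nonemptiness by properness of the prime ideals $J_m^i(R_m)_{y_1}$.
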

	\begin{Rem} (\cite[Theorem 3.2]{M2})
The codimension of $Z_m^i$ in $\mathbb{A}^{3(m+1)}$ is equal to $m+2$.
The defining ideal of $X_m^0$ is generated by $m+2$ elements $x_0,y_0,z_0$ and $f^{(2)},...,f^{(m)}$,
so $X_m^0$ is a complete intersection in $\mathbb{A}^{3(m+1)}$.
Note that the dimension of the smooth locus $X_{\mathrm{sm}}$ of $X$ is $2(m+1)$ and the codimension of $X_{\mathrm{sm}}$ in $\mathbb{A}^{3(m+1)}$ is $m+1$.
	\end{Rem}
From now on,
we suppose $m \geq 5$.

We will take advantage of the symmetries of $X$.
Let $\varphi_1$ and $\varphi_2$ be the automorphisms of $R_m$ defined by
	\[
\varphi_1 :
		\begin{cases}
	x_i \mapsto x_i, \\
	y_i \mapsto - y_i, \\
	z_i \mapsto z_i,
		\end{cases}
\varphi_2 :
		\begin{cases}
	x_i \mapsto x_i, \\
	y_i \mapsto -\tfrac{1}{2}y_i + \tfrac{3}{2}z_i, \\
	z_i \mapsto -\tfrac{1}{2}y_i - \tfrac{1}{2}z_i.
		\end{cases}
	\]
	\begin{Rem} \label{varphi preserve the f^{(i)}}
The automorphisms $\varphi_1$ and $\varphi_2$ are induced by the automorphisms of $\CC[x,y,z]$ defined by
	\[
		\begin{cases}
	x \mapsto x, \\
	y \mapsto -y, \\
	z \mapsto z,
		\end{cases}
\mathrm{and}
		\begin{cases}
	x \mapsto x, \\
	y \mapsto -\tfrac{1}{2}y + \tfrac{3}{2}z, \\
	z \mapsto -\tfrac{1}{2}y - \tfrac{1}{2}z,
		\end{cases}
	\]
for which the polynomial $f$ is invariant.
So $\varphi_1$ and $\varphi_2$ preserve the polynoimals $f^{(j)} (j \in \{0,...,m\})$.
Therefore,
$\varphi_1$ and $\varphi_2$ induce automorphisms of $X_m$.
We denote these morphisms by $\psi_1$ and $\psi_2$.
	\end{Rem}
Now we show how the morphisms $\psi_1$ and $\psi_2$ act on the set of the closed subvarieties $\{Z_m^0,Z_m^1, Z_m^2,Z_m^3\}$.
We need the following lemma.
	\begin{Lemma}\label{I_m^i another defining ideals}
Assume $m \geq 5$.
For $i = 1,2,3$,
we have
	\begin{center}
$Z_m^i = \overline{\mathbf{V}(J_m^i) \cap \mathbf{D}(y_1)} = \overline{\mathbf{V}(J_m^i) \cap \mathbf{D}(y_1 -3z_1)}$.
	\end{center}
	\end{Lemma}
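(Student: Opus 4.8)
The plan is to establish the second equality by proving the stronger, purely set-theoretic (indeed scheme-theoretic) identity
\[
\mathbf{V}(J_m^i) \cap \mathbf{D}(y_1) = \mathbf{V}(J_m^i) \cap \mathbf{D}(y_1 - 3z_1),
\]
after which equality of the two Zariski closures is immediate; the first equality in the statement is just Definition \ref{definition of irreducible components of X_m^0 for D_4-type singular surface}. The underlying observation is that on $\mathbf{V}(J_m^i)$ the two linear forms $y_1$ and $y_1 - 3z_1$ differ by a nonzero scalar, so they cut out the same non-vanishing locus there.

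To make this precise I would first record, for each $i$, a congruence of the form $y_1 - 3z_1 \equiv c_i\, y_1 \pmod{L^i}$ with $c_i \in \CC \setminus \{0\}$. Reading off the generators of $L^i$ from \eqref{mainPartI_m^1}, \eqref{mainPartI_m^2}, \eqref{mainPartI_m^3}: for $i = 1$ one has $z_1 \in L^1$, so $y_1 - 3z_1 \equiv y_1$ and $c_1 = 1$; for $i = 2$ one has $z_1 \equiv y_1 \pmod{L^2}$, so $y_1 - 3z_1 \equiv -2y_1$ and $c_2 = -2$; and for $i = 3$ one has $z_1 \equiv -y_1 \pmod{L^3}$, so $y_1 - 3z_1 \equiv 4y_1$ and $c_3 = 4$.

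Since $L^i \subseteq J_m^i$ by \eqref{OpenPrimeIdealV_m^1}, \eqref{OpenPrimeIdealV_m^2}, \eqref{OpenPrimeIdealV_m^3}, the same congruence holds modulo $J_m^i$, so in the quotient $R_m/J_m^i$ the image of $y_1 - 3z_1$ equals $c_i$ times the image of $y_1$ with $c_i$ a unit. Consequently inverting $y_1$ and inverting $y_1 - 3z_1$ give the same localization of $R_m/J_m^i$, which yields the displayed identity of open subschemes of $\mathbf{V}(J_m^i)$; equivalently, for every point $p \in \mathbf{V}(J_m^i)$ we have $(y_1 - 3z_1)(p) = c_i\, y_1(p)$, so $y_1(p) \neq 0$ if and only if $(y_1 - 3z_1)(p) \neq 0$. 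Taking closures in $X_m$ then completes the argument.

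There is no genuine obstacle here: the only point requiring care is to confirm that the scalar $c_i$ is nonzero in each of the three cases, which the explicit values $c_1 = 1$, $c_2 = -2$, $c_3 = 4$ settle. This is precisely the form in which the lemma is needed for the subsequent analysis of how $\psi_2$ permutes $\{Z_m^0, Z_m^1, Z_m^2, Z_m^3\}$, since $\varphi_2$ sends $y_1$ to $-\tfrac{1}{2}(y_1 - 3z_1)$ and hence carries $\mathbf{D}(y_1)$ to an open set governed by $y_1 - 3z_1$.
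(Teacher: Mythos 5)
Your proposal is correct and follows essentially the same route as the paper: the paper's proof likewise observes that one of $z_1$, $y_1 - z_1$, $y_1 + z_1$ lies in $J_m^i$, so that $y_1(P) = 0$ if and only if $(y_1 - 3z_1)(P) = 0$ on $\mathbf{V}(J_m^i)$, giving the set-theoretic equality $\mathbf{V}(J_m^i) \cap \mathbf{D}(y_1) = \mathbf{V}(J_m^i) \cap \mathbf{D}(y_1 - 3z_1)$ before taking closures. Your explicit constants $c_1 = 1$, $c_2 = -2$, $c_3 = 4$ merely spell out the same observation in slightly more detail.
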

	\begin{proof}
From Proposition \ref{J_m^i*(R_m)_{y_1} are not equal to (R_m)_{y_1}},
$y_1 \notin \sqrt{J_m^i}$ for $i = 1,2,3$.
Moreover,
we note that $z_1$,
$y_1 - z_1$ or $y_1 + z_1 \in J_m^i$.
Then $y_1(P) = 0$ if and only if $(y_1 - 3z_1)(P) =0$ for $P \in \mathbf{V}(J_m^i)$.
Thus $\mathbf{V}(J_m^i) \cap \mathbf{D}(y_1) = \mathbf{V}(J_m^i) \cap \mathbf{D}(y_1 -3z_1)$,
so
	\begin{center}
$\overline{\mathbf{V}(J_m^i) \cap \mathbf{D}(y_1 -3z_1)} = \overline{\mathbf{V}(J_m^i) \cap \mathbf{D}(y_1)} = Z_m^i$
	\end{center}
by Proposition \ref{J_m^i*(R_m)_{y_1} are not equal to (R_m)_{y_1}}.
	\end{proof}
	\begin{Prop} \label{trancefar irreducible components}
Assume $m \geq 5$.
The irreducible components of $X_m^0$ are mapped to another by $\psi_s (s = 1,2)$ as follows:
	\begin{enumerate}
\item[(a)] $\psi_1(Z_m^0) = Z_m^0$, $\psi_1(Z_m^1) = Z_m^1$, $\psi_1(Z_m^2) = Z_m^3$ and $\psi_1(Z_m^3) = Z_m^2$.
\item[(b)] $\psi_2(Z_m^0) = Z_m^0$, $\psi_2(Z_m^1) = Z_m^2$, $\psi_2(Z_m^2) = Z_m^3$ and $\psi_2(Z_m^3) = Z_m^1$.
	\end{enumerate}
	\end{Prop}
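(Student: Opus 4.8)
The plan is to exploit that each $\psi_s$ is an automorphism of $X_m$, hence a homeomorphism that commutes with Zariski closures and with intersections. Writing $\varphi_s$ for the pullback of functions along $\psi_s$, one has the standard formulas $\psi_s^{-1}(\mathbf{V}(J)) = \mathbf{V}(\varphi_s(J))$ and $\psi_s^{-1}(\mathbf{D}(g)) = \mathbf{D}(\varphi_s(g))$ for any ideal $J$ and any $g \in R_m$. Since proving $\psi_s(Z_m^i) = Z_m^{\sigma(i)}$ is the same as proving $\psi_s^{-1}(Z_m^{\sigma(i)}) = Z_m^i$, I would push $\psi_s^{-1}$ through the closure in Definition \ref{definition of irreducible components of X_m^0 for D_4-type singular surface} to get $\psi_s^{-1}(Z_m^{j}) = \overline{\mathbf{V}(\varphi_s(J_m^{j})) \cap \mathbf{D}(\varphi_s(y_1))}$. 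By Remark \ref{varphi preserve the f^{(i)}}, $\varphi_s$ fixes every $f^{(j)}$, so $\varphi_s(J_m^{j}) = \varphi_s(L^{j}) + \langle f^{(0)},\dots,f^{(m)}\rangle$ and likewise $\varphi_s(I_m^0) = \varphi_s(L_{322}) + \langle f^{(0)},\dots,f^{(m)}\rangle$. Thus everything reduces to how $\varphi_1,\varphi_2$ act on the \emph{linear} ideals $L^1, L^2, L^3, L_{322}$ and on the single function $y_1$.

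These are short linear-algebra computations. For $\varphi_1$ (which fixes $x_i, z_i$ and sends $y_i \mapsto -y_i$) one checks, up to signs of generators, that $\varphi_1(L^1) = L^1$, $\varphi_1(L^2) = L^3$, $\varphi_1(L^3) = L^2$ and $\varphi_1(L_{322}) = L_{322}$, while $\varphi_1(y_1) = -y_1$, so $\mathbf{D}(\varphi_1(y_1)) = \mathbf{D}(y_1)$. For $\varphi_2$ (fixing $x_i$ and acting invertibly on each $\langle y_i, z_i\rangle$) one finds $\varphi_2(z_1) = -\tfrac12(y_1 + z_1)$, $\varphi_2(y_1 - z_1) = 2z_1$ and $\varphi_2(y_1 + z_1) = -(y_1 - z_1)$, whence $\varphi_2(L^1) = L^3$, $\varphi_2(L^2) = L^1$, $\varphi_2(L^3) = L^2$; since $\varphi_2$ maps $\langle y_0, z_0\rangle$ and $\langle y_1, z_1\rangle$ isomorphically onto themselves, also $\varphi_2(L_{322}) = L_{322}$; finally $\varphi_2(y_1) = -\tfrac12 y_1 + \tfrac32 z_1 = -\tfrac12(y_1 - 3z_1)$, so $\mathbf{D}(\varphi_2(y_1)) = \mathbf{D}(y_1 - 3z_1)$.

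It then remains to assemble. For the central component, $\varphi_s(I_m^0) = L_{322} + \langle f^{(0)},\dots,f^{(m)}\rangle = I_m^0$ gives $\psi_s^{-1}(Z_m^0) = Z_m^0$, hence $\psi_s(Z_m^0) = Z_m^0$, for $s=1,2$. For $s=1$, because $\mathbf{D}(\varphi_1(y_1)) = \mathbf{D}(y_1)$, we get directly $\psi_1^{-1}(Z_m^1) = Z_m^1$, $\psi_1^{-1}(Z_m^2) = \overline{\mathbf{V}(J_m^3) \cap \mathbf{D}(y_1)} = Z_m^3$ and $\psi_1^{-1}(Z_m^3) = Z_m^2$, which is exactly (a). For $s=2$ the distinguished open set changes: here $\psi_2^{-1}(Z_m^{j}) = \overline{\mathbf{V}(\varphi_2(J_m^{j})) \cap \mathbf{D}(y_1 - 3z_1)}$, and the decisive point is that by Lemma \ref{I_m^i another defining ideals} this closure equals $Z_m^{\varphi_2(j)}$ computed via $\mathbf{D}(y_1-3z_1)$ rather than $\mathbf{D}(y_1)$; one thus reads off $\psi_2^{-1}(Z_m^1) = Z_m^3$, $\psi_2^{-1}(Z_m^2) = Z_m^1$, $\psi_2^{-1}(Z_m^3) = Z_m^2$, equivalently $\psi_2(Z_m^1) = Z_m^2$, $\psi_2(Z_m^2) = Z_m^3$, $\psi_2(Z_m^3) = Z_m^1$, which is (b). I expect the only genuine subtlety to be this last step: under $\psi_2$ the open set $\mathbf{D}(y_1)$ is not preserved but is carried to $\mathbf{D}(y_1 - 3z_1)$, and the alternative description of $Z_m^i$ furnished by Lemma \ref{I_m^i another defining ideals} is precisely what lets one recognize the resulting closure as a genuine $Z_m^{\bullet}$; the accompanying ideal computations are routine.
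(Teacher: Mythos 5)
Your proof is correct and follows essentially the same route as the paper: both arguments reduce the claim to the action of $\varphi_s$ on the linear ideals $L^1,L^2,L^3,L_{322}$ (using that the $f^{(j)}$ are $\varphi_s$-invariant) and both invoke Lemma \ref{I_m^i another defining ideals} at exactly the same spot, namely to identify $\overline{\mathbf{V}(J_m^i)\cap\mathbf{D}(y_1-3z_1)}$ with $Z_m^i$ once one sees that $\psi_2$ interchanges $\mathbf{D}(y_1)$ and $\mathbf{D}(y_1-3z_1)$. The only (harmless) divergence is in the endgame: you compute exact equalities $\varphi_s(L^j)=L^i$ and read off the permutation directly, whereas the paper verifies only the inclusions $\varphi_s(L^j)\subseteq L^i$, hence one inclusion of varieties, and then concludes equality from the irreducibility of the $Z_m^i$ together with the fact that the isomorphism $\psi_s$ preserves dimension.
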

	\begin{proof}
The case $\psi_s(Z_m^0) = Z_m^0$ can be obtained by a direct calculation.
The other cases as follows.
For $i = 1,2,3$,
the subvariety $Z_m^i$ is irreducible,
and by the previous lemma we only have to show that
	\begin{center}
$\psi_s(\mathbf{V}(J_m^i) \cap \mathbf{D}(\varphi_s(y_1))) \subseteq \mathbf{V}(J_m^j) \cap \mathbf{D}(y_1)$
	\end{center}
to prove $\psi_s(Z_m^i) = Z_m^j$,
for the isomorphism $\psi_s$ preserve the dimension of a subvariety.
Since $\psi_s(\mathbf{D}(\varphi_s(y_1))) \subseteq \mathbf{D}(y_1)$,
it suffices to show that
	\begin{center}
$\psi_s(\mathbf{V}(J_m^i)) \subseteq \mathbf{V}(J_m^j)$.
	\end{center}
We can easily check that $\varphi_s(L^j) \subseteq L^i$ for triples $(i,j,s)$ as in the statement,
and the assertion follows.
	\end{proof}
	\begin{Coro} \label{varphi act the ideals as image}
We have the following:
	\begin{itemize}
\item[(a')] $\varphi_1(I_m^0) = I_m^0$,
$\varphi_1(I_m^1) = I_m^1$,
$\varphi_1(I_m^2) = I_m^3$ and $\varphi_1(I_m^3) = I_m^2$.
\item[(b')] $\varphi_2(I_m^0) = I_m^0$,
$\varphi_2(I_m^1) = I_m^3$,
$\varphi_2(I_m^2) = I_m^1$ and $\varphi_2(I_m^3) = I_m^2$.
	\end{itemize}
	\end{Coro}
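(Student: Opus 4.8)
The plan is to read off Corollary~\ref{varphi act the ideals as image} from Proposition~\ref{trancefar irreducible components} by translating the equalities of subvarieties into equalities of their defining ideals. The bridge is the relation between the scheme automorphism $\psi_s$ and the ring automorphism $\varphi_s$ inducing it (Remark~\ref{varphi preserve the f^{(i)}}): since $\psi_s$ pulls back a function $g \in R_m$ to $\varphi_s(g)$, its inverse pulls back $g$ to $\varphi_s^{-1}(g)$, and therefore for any ideal $I \subseteq R_m$ one has $\psi_s(\mathbf{V}(I)) = \mathbf{V}(\varphi_s^{-1}(I))$. Indeed, a closed point $q$ lies in $\psi_s(\mathbf{V}(I))$ if and only if $\psi_s^{-1}(q) \in \mathbf{V}(I)$, that is, if and only if $(\varphi_s^{-1}(g))(q) = 0$ for every $g \in I$.

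First I would note that each ideal $I_m^i$ is prime, hence radical: this is Lemma~\ref{Zariski closure} for $i = 1,2,3$ and Proposition~\ref{J_m^i*(R_m)_{y_1} are not equal to (R_m)_{y_1}} for $i = 0$, and in both cases $Z_m^i = \mathbf{V}(I_m^i)$. As $\varphi_s$ is an automorphism of $R_m$, the image $\varphi_s^{-1}(I_m^i)$ is again prime, in particular radical. Now, for each relation $\psi_s(Z_m^i) = Z_m^{\sigma(i)}$ supplied by Proposition~\ref{trancefar irreducible components} (with $\sigma$ the relevant permutation of $\{0,1,2,3\}$), the displayed formula rewrites the left side as $\mathbf{V}(\varphi_s^{-1}(I_m^i))$, so that $\mathbf{V}(\varphi_s^{-1}(I_m^i)) = \mathbf{V}(I_m^{\sigma(i)})$. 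Since both ideals are radical and $\CC$ is algebraically closed, the Nullstellensatz gives $\varphi_s^{-1}(I_m^i) = I_m^{\sigma(i)}$, equivalently $\varphi_s(I_m^{\sigma(i)}) = I_m^i$. Running through the two permutations in parts (a) and (b) of the proposition yields exactly the identities listed in (a$'$) and (b$'$).

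The content of the argument is entirely contained in Proposition~\ref{trancefar irreducible components}, so there is no real computational obstacle; the only step demanding care is the direction of the correspondence $\psi_s \leftrightarrow \varphi_s$, where an inverse appears on passing from the morphism to its effect on ideals. This matters precisely because $\varphi_2$ has order three rather than two: for the involution $\varphi_1$ the inverse is invisible, but for $\varphi_2$ the naive formula $\psi_s(\mathbf{V}(I)) = \mathbf{V}(\varphi_s(I))$ would, for example, force $\varphi_2(I_m^1) = I_m^2$ instead of the correct $\varphi_2(I_m^1) = I_m^3$. Verifying the stated permutations against this built-in consistency check is the one place where I would slow down.
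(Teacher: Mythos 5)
Your proof is correct and is exactly the argument the paper leaves implicit: the Corollary is deduced directly from Proposition \ref{trancefar irreducible components} via $\psi_s(\mathbf{V}(I)) = \mathbf{V}(\varphi_s^{-1}(I))$ together with the primality (hence radicality) of the ideals $I_m^0,\dots,I_m^3$ from Proposition \ref{J_m^i*(R_m)_{y_1} are not equal to (R_m)_{y_1}} and Lemma \ref{Zariski closure}, and the Nullstellensatz. You also correctly isolate the only delicate point, the inverse appearing when passing from the morphism $\psi_s$ to its action on ideals, which is precisely why the order-three automorphism $\varphi_2$ permutes the ideals by the cycle $1 \mapsto 3 \mapsto 2 \mapsto 1$, the inverse of the cycle $1 \mapsto 2 \mapsto 3 \mapsto 1$ that $\psi_2$ induces on the components $Z_m^i$.
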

For the proof of our main result in this section,
Theorem \ref{maain result in section4},
we will give a few explicit elements of $I_m^i$ and $I_m^{i,j} := \sqrt{I_m^i + I_m^j}$.

Let
	\begin{center}
$g_1 := -4y_2^2z_2^2 + y_1^2z_3^2 + 4x_3^2z_2 - 4x_2x_3z_3$.
	\end{center}
and
	\begin{alignat}{3}
g_2 :=&\  -y_2^4 - 4y_2^3z_2 + 2y_2^2z_2^2 + 12y_2z_2^3 - 9z_2^4 + 4y_3^2z_1^2 - 8y_3z_1^2z_3 + 4z_1^2z_3^2\\
 &\  + 8x_3^2y_2 - 8x_3^2z_2 - 8x_2x_3y_3 + 8x_2x_3z_3.
	\end{alignat}
We computed the ideals $I_m^1$ and $I_m^2$ for small values of $m$ using Macaulay2,
and found that $g_1$ and $g_2$ belong to $I_m^1$ and $I_m^2$ respectively for $m \geq 5$.
These two elements play important roles in the proof of the main theorem.
For reader's convenience,
we will check this by hand in what follows.

	\begin{Rem} \label{(1)unit in localization (2)relation of I_m^i and J_m^i}
Our strategy for finding elements of $I_m^i$ is as follows.
Let $g \in R_m$ and $n \in \mathbb{Z}_{\geq 0}$.
Suppose $y_1^n g \in J_m^i$.
Then
	\begin{center}
$g \in J_m^i(R_m)_{y_1} \cap R_m = I_m^i$,
	\end{center}
since $y_1$ is a unit in $(R_m)_{y_1}$.
	\end{Rem}
	\begin{Lemma} \label{important elements}
For $m \geq 5$,
we have $g_1 \in I_m^1$ and $g_2 \in I_m^2$.
	\end{Lemma}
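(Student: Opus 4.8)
The plan is to invoke Remark~\ref{(1)unit in localization (2)relation of I_m^i and J_m^i}: it suffices to produce an exponent $N$ with $y_1^N g_1 \in J_m^1$ and $y_1^N g_2 \in J_m^2$, and I claim $N=2$ works in both cases. Since $m \geq 5$, the polynomials $f^{(4)}$ and $f^{(5)}$ occur among the generators of $J_m^i$, and the first step is to reduce them modulo the linear part $L^i$. Expanding $f = x^2 - y^2 z + z^3$ and discarding the variables in $L^1$ (so $x_0=x_1=y_0=z_0=z_1=0$) gives
\[
f^{(4)} \equiv x_2^2 - y_1^2 z_2, \qquad f^{(5)} \equiv 2x_2 x_3 - y_1^2 z_3 - 2 y_1 y_2 z_2 \quad \mathrm{mod}\ L^1 ,
\]
while reducing modulo $L^2$ (where in addition $y_1=z_1$) gives
\[
f^{(4)} \equiv x_2^2 - 2 z_1^2(y_2 - z_2), \qquad f^{(5)} \equiv 2 x_2 x_3 - 2 z_1^2(y_3 - z_3) - z_1 (y_2 + 3z_2)(y_2 - z_2) \quad \mathrm{mod}\ L^2 .
\]

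For $g_1$ I would work in $R_m/L^1$ with the two relations $x_2^2 \equiv y_1^2 z_2$ and $2 x_2 x_3 \equiv y_1^2 z_3 + 2 y_1 y_2 z_2$. Multiplying $g_1$ by $y_1^2$ and eliminating the $x$-variables is then immediate: the summand $4 y_1^2 x_3^2 z_2$ equals $4 x_2^2 x_3^2 = (2x_2 x_3)^2$, which the second relation rewrites purely in $y,z$, and $4 y_1^2 x_2 x_3 z_3 = 2 y_1^2 z_3 (2 x_2 x_3)$ is handled directly. A one-line cancellation yields $y_1^2 g_1 \equiv 0$, hence $y_1^2 g_1 \in J_m^1$ and $g_1 \in I_m^1$.

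The substantive computation is $g_2$, and the main obstacle is organising the degree-four elimination so that everything cancels. The useful preliminary step is the regrouping
\[
g_2 = Q(y_2,z_2) + 4 z_1^2(y_3 - z_3)^2 + 8 x_3^2 (y_2 - z_2) - 8 x_2 x_3 (y_3 - z_3),
\]
where $Q(y_2,z_2) = -y_2^4 - 4 y_2^3 z_2 + 2 y_2^2 z_2^2 + 12 y_2 z_2^3 - 9 z_2^4$. Because $y_1 - z_1 \in L^2 \subseteq J_m^2$, multiplying by $y_1^2$ is the same as multiplying by $z_1^2$ modulo $J_m^2$. Writing $A := 2 z_1^2(y_3 - z_3)$ and $B := z_1(y_2 + 3z_2)(y_2 - z_2)$, the two reductions read $x_2^2 \equiv 2 z_1^2(y_2 - z_2)$ and $2 x_2 x_3 \equiv A + B$; consequently $8 z_1^2 x_3^2 (y_2 - z_2) = (2x_2 x_3)^2 \equiv (A+B)^2$ and $-8 z_1^2 x_2 x_3 (y_3 - z_3) = -2A(A+B)$. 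The $A^2$ contributions cancel against $4 z_1^4 (y_3-z_3)^2 = A^2$, leaving
\[
z_1^2 g_2 \equiv z_1^2 \bigl[\, Q(y_2,z_2) + (y_2^2 + 2 y_2 z_2 - 3 z_2^2)^2 \,\bigr] \quad \mathrm{mod}\ J_m^2 .
\]
The proof then finishes with the purely polynomial identity $Q(y_2,z_2) + (y_2^2 + 2 y_2 z_2 - 3 z_2^2)^2 = 0$, checked by expanding the square, so $z_1^2 g_2 \equiv 0$, whence $y_1^2 g_2 \in J_m^2$ and $g_2 \in I_m^2$. The delicate point throughout is the bookkeeping of the cross terms in the $L^2$-reduction, in particular the correct appearance of the factor $(y_2 + 3z_2)(y_2 - z_2)$ in $f^{(5)}$, since it is exactly this factor whose square annihilates the quartic $Q$.
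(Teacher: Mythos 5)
Your proposal is correct, and I verified the computations: the reductions of $f^{(4)}$ and $f^{(5)}$ modulo $L^1$ and $L^2$ are right, the cancellation $A^2 + (A+B)^2 - 2A(A+B) = B^2$ holds, and the quartic identity $Q(y_2,z_2) + (y_2^2 + 2y_2z_2 - 3z_2^2)^2 = 0$ checks out, so $y_1^2 g_1 \in J_m^1$ and $y_1^2 g_2 \equiv z_1^2 g_2 \equiv 0 \bmod J_m^2$ as claimed. For $g_1$ your argument coincides with the paper's: your elimination modulo the relations $x_2^2 \equiv y_1^2 z_2$ and $2x_2x_3 \equiv y_1^2z_3 + 2y_1y_2z_2$ is exactly the paper's explicit syzygy $y_1^2 g_1 = F^{(5)2} - 4x_3^2F^{(4)} + 4y_1y_2z_2F^{(5)}$ written as a congruence. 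For $g_2$, however, you take a genuinely different route. The paper does not redo the elimination at all: it invokes the symmetry $\varphi_2$ of Corollary \ref{varphi act the ideals as image}, which gives $\varphi_2^{-1}(g_1) \in \varphi_2^{-1}(I_m^1) = I_m^2$, and then substitutes $z_1$ for $y_1$ (legitimate since $y_1 - z_1 \in L^2 \subseteq I_m^2$) to find that the resulting element is exactly $\tfrac{1}{4}g_2$ --- so $g_2$ is obtained ``for free'' from $g_1$, with your quartic cancellation hidden inside the linear change of variables. Your direct computation is heavier but self-contained (it never needs Proposition \ref{trancefar irreducible components} or its corollary) and it explains structurally \emph{why} $g_2$ lies in $I_m^2$: the factor $(y_2+3z_2)(y_2-z_2)$ appearing in $f^{(5)} \bmod L^2$ has square equal to $-Q$. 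The paper's approach is shorter and also explains where the otherwise unmotivated polynomial $g_2$ comes from in the first place, namely as the $\varphi_2$-image of $g_1$. Both arguments rest on the same localization principle, Remark \ref{(1)unit in localization (2)relation of I_m^i and J_m^i}.
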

	\begin{proof}
First,
we prove $g_1 \in I_m^1$.
Let $\mathbf{a} := x_2t^2 + x_3t^3 + x_4t^4 + x_5t^5$,
$\mathbf{b} := y_1t + y_2t^2 + y_3t^3 + y_4t^4 + y_5t^5$ and $\mathbf{c} := z_2t^2 + z_3t^3 + z_4t^4 + z_5t^5$.
We denote the coefficient of $t^i$ in $f(\mathbf{a}, \mathbf{b}, \mathbf{c})$ by $F^{(i)}$.
Then,
since
	\begin{center}
$\mathbf{x} \equiv \mathbf{a}, \mathbf{y} \equiv \mathbf{b}, \mathbf{z} \equiv \mathbf{c}$ mod $L^1 \cdot R_m[t]/\langle t^6 \rangle$
	\end{center}
(see Notation \ref{about Jet polynomials}),
we have
	\begin{center}
$F^{(i)} \equiv f^{(i)}$ mod $L^1 \cdot R_m[t]/\langle t^6 \rangle$,
	\end{center}
so $F^{(i)} \in  J_m^1$.
We calculate
	\begin{alignat}{4}
F^{(4)} =\ & x_2^2 \ \ && - y_1^2z_2	\\
F^{(5)} =\ & 2x_2x_3 \ \ && -2y_1y_2z_2 - y_1^2z_3,
	\end{alignat}
and then
	\begin{alignat}{6}
F^{(5)2} =\ &\ \ \ \ 4y_1^2y_2^2z_2^2 \ &&+y_1^4z_3^2 - 4x_2x_3y_1^2z_3 \ &&+ 4x_2^2x_3^2 \ &&- 8x_2x_3y_1y_2z_2 + 4y_1^3y_2z_2z_3\\
-4x_3^2F^{(4)} =\ && \ &\ 4x_3^2y_1^2z_2 \ &&- 4x_2^2x_3^2\\
4y_1y_2z_2F^{(5)} =\ &- 8y_1^2y_2^2z_2^2 \ && && &&+ 8x_2x_3y_1y_2z_2 - 4y_1^3y_2z_2z_3
	\end{alignat}
Hence
	\begin{center}
$y_1^2g_1 = F^{(5)2} - 4x_3^2F^{(4)} + 4y_1y_2z_2F^{(5)} \in J_m^1$.
	\end{center}
By Remark \ref{(1)unit in localization (2)relation of I_m^i and J_m^i},
$g_1 \in I_m^1$.

Next we prove $g_2 \in I_m^2$.
We calculate as follows:
By Corollary \ref{varphi act the ideals as image},
we have
	\begin{alignat}{5}
\varphi_2^{-1}(g_1) =&\ \displaystyle -4\left(-\frac{1}{2}y_2-\frac{3}{2}z_2 \right)^2 \left(\frac{1}{2}y_2 - \frac{1}{2}z_2 \right)^2 + \left( -\frac{1}{2}y_1 - \frac{3}{2}z_1 \right)^2 \left( \frac{1}{2}y_3 - \frac{1}{2}z_3 \right)^2\\
 &\ +4x_3^2 \left(\frac{1}{2}y_2 - \frac{1}{2}z_2 \right) - 4x_2x_3 \left(\frac{1}{2}y_3 - \frac{1}{2}z_3 \right) & \ \in I_m^2\\
	\end{alignat}
Since $y_1 - z_1 \in L^2$,
we obtain an element of $I_m^2$ when $y_1$ is replaced by $z_1$.
Then the right hand side equal to
	\begin{alignat}{3}
&\ \displaystyle \frac{1}{4} ( -y_2^4 - 4y_2^3z_2 + 2y_2^2z_2^2 + 12y_2z_2^3 - 9z_2^4 + 4y_3^2z_1^2 - 8y_3z_1^2z_3 + 4z_1^2z_3^2\\
 &\ \  + 8x_3^2y_2 - 8x_3^2z_2 - 8x_2x_3y_3 + 8x_2x_3z_3 )\\
 = &\ \displaystyle \frac{1}{4}g_2.
	\end{alignat}
Thus $g_2 \in I_m^2$.
	\end{proof}
	\begin{Lemma} \label{the corrdinate element of I_m^i,j}
For $m \geq 5$ and $i,j \in \{1,2,3\}$ with $i \neq j$,
we have $y_1, z_1,x_2 \in I_m^{i,j}$,
i.e. $I_m^{i,j} \supseteq L_{322}$.
	\end{Lemma}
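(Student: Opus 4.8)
The plan is to locate $y_1$, $z_1$ and $x_2$ individually inside $I_m^{i,j}=\sqrt{I_m^i+I_m^j}$; the four remaining generators $x_0,x_1,y_0,z_0$ of $L_{322}$ require no argument, since each $L^i$ from Notation \ref{about Jet polynomials} contains them and $L^i\subseteq J_m^i\subseteq I_m^i\subseteq I_m^{i,j}$ (the inclusion $J_m^i\subseteq I_m^i$ because $I_m^i$ is the contraction of the extension of $J_m^i$ by Definition \ref{definition of Z_m^i}, and $I_m^i\subseteq I_m^{i,j}$ because $I_m^{i,j}$ is a radical containing $I_m^i+I_m^j$). So the whole statement reduces to producing $y_1,z_1,x_2$.

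First I would extract $y_1$ and $z_1$ from the linear parts alone, using $L^i+L^j\subseteq I_m^i+I_m^j\subseteq I_m^{i,j}$. For $\{i,j\}=\{1,2\}$ one has $z_1\in L^1$ and $y_1-z_1\in L^2$, so $y_1=(y_1-z_1)+z_1\in I_m^{1,2}$; for $\{i,j\}=\{1,3\}$, from $z_1\in L^1$ and $y_1+z_1\in L^3$ one gets $y_1\in I_m^{1,3}$; for $\{i,j\}=\{2,3\}$, from $y_1-z_1\in L^2$ and $y_1+z_1\in L^3$ one obtains $2y_1$ and $2z_1$. In every case $y_1,z_1\in I_m^{i,j}$, so together with the previous paragraph the ideal $N:=\langle x_0,x_1,y_0,y_1,z_0,z_1\rangle$ is contained in $I_m^{i,j}$.

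The substantive step is to produce $x_2$ from the jet relations. Since $m\ge 5$, the polynomial $f^{(4)}$ is among the generators of each $J_m^i$, hence $f^{(4)}\in I_m^{i,j}$. Writing $f^{(4)}=\sum_{a+b=4}x_ax_b-\sum_{a+b+c=4}y_ay_bz_c+\sum_{a+b+c=4}z_az_bz_c$, I would reduce it modulo $N$. Each monomial of the two cubic sums must contain a factor of index $\le 1$, since three indices summing to $4$ cannot all be $\ge 2$; thus those sums lie in $N$. In the quadratic sum $2x_0x_4+2x_1x_3+x_2^2$, every term but $x_2^2$ contains $x_0$ or $x_1$. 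Hence $f^{(4)}\equiv x_2^{2}\pmod N$, giving $x_2^{2}\in I_m^{i,j}$, and radicality of $I_m^{i,j}$ yields $x_2\in I_m^{i,j}$, so $L_{322}\subseteq I_m^{i,j}$.

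The step I would treat most carefully is this last reduction together with the appeal to radicality: it is $x_2^2$, not $x_2$, that emerges from $f^{(4)}$, so the conclusion genuinely requires $I_m^{i,j}$ to be radical. One should also confirm that no earlier relation already forces $x_2$: indeed $f^{(2)}$ and $f^{(3)}$ both reduce to $0$ modulo $N$ (all their monomials meet $N$), which is exactly why order $4$ is the first to contribute $x_2$ and why the hypothesis $m\ge 5$ suffices.
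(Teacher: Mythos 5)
Your proposal is correct and follows essentially the same route as the paper: both arguments obtain $y_1,z_1$ from the linear generators of the ideals $L^i$, then reduce $f^{(4)}$ modulo $L_{222}=\langle x_0,x_1,y_0,y_1,z_0,z_1\rangle$ to get $x_2^2\in I_m^{i,j}$ and invoke radicality of $I_m^{i,j}=\sqrt{I_m^i+I_m^j}$ to conclude $x_2\in I_m^{i,j}$. The only (harmless) difference is that the paper reduces to the single pair $(i,j)=(1,2)$ via the symmetries $\varphi_1,\varphi_2$ and Corollary \ref{varphi act the ideals as image}, whereas you verify the three pairs $\{1,2\},\{1,3\},\{2,3\}$ directly by elementary combinations of $z_1$, $y_1-z_1$ and $y_1+z_1$.
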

	\begin{proof}
Note that,
if $y_1, z_1 \in I$ for an ideal $I$,
then we can easily chech that $y_1, z_1 \in \varphi_s(I)$,
for $s = 1,2$.
We also note that $\varphi_s$ ($s = 1,2$) preserves the elements $x_i$,
for $i = 0, ..., m$.
Hence it is sufficient to show that these three elements belong to $I_m^{1,2}$ by Corollary \ref{varphi act the ideals as image}.
First we check that $y_1,z_1 \in I_m^{1,2}$ i.e.,
$I_m^{1,2} \supseteq L_{222}$.
By the definition of $I_m^i$ and $I_m^{i,j}$,
$I_m^{1,2} = \sqrt{I_m^1 + I_m^2} \supseteq J_m^1 + J_m^2$.
So $y_1$ and $y_1 - z_1$ belong to $I_m^{1,2}$,
i.e. $y_1$ and $z_1$ belong to $I_m^{1,2}$.

Next we show that $x_2$ belongs to $I_m^{1,2}$.
Let $\mathbf{a} = x_2t^2 + x_3t^3 + x_4t^4$,
$\mathbf{b} = y_2t^2 + y_3t^3 + y_4t^4$ and $\mathbf{c} = z_2t^2 + z_3t^3 + z_4t^4$.
Then the coefficient of $t^4$ of $f(\mathbf{a}, \mathbf{b}, \mathbf{c})$ is equal to $x_2^2$.
Moreover,
we have $\mathbf{x} \equiv \mathbf{a}, \mathbf{y} \equiv \mathbf{b}, \mathbf{z} \equiv \mathbf{c}$ mod $L_{222}$,
hence
	\begin{center}
$x_2^2 \equiv f^{(4)}$ mod $L_{222}$,
	\end{center}
and $x_2 \in \sqrt{I_m^{1,2}} = I_m^{1,2}$ i.e.,
$I_m^{1,2} \supseteq L_{322}$.
	\end{proof}
We need the following two lemmas for the proof of the main theorem.
	\begin{Lemma} \label{relation 0 component}
For $m \geq 5$ and for any $i,j \in \{1,2,3\}$ with $i \neq j$,
we have $Z_m^i \cap Z_m^j \subseteq Z_m^0$.
	\end{Lemma}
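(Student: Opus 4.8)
The plan is to reduce the asserted inclusion of varieties to an inclusion of ideals and then invoke the two structural facts already assembled. Since $Z_m^0 = \mathbf{V}(I_m^0)$ and $Z_m^i \cap Z_m^j = \mathbf{V}(I_m^i) \cap \mathbf{V}(I_m^j) = \mathbf{V}(I_m^i + I_m^j) = \mathbf{V}(I_m^{i,j})$, it suffices to prove the \emph{reverse} inclusion of ideals $I_m^0 \subseteq I_m^{i,j}$; applying $\mathbf{V}(-)$ then reverses the inclusion and yields $Z_m^i \cap Z_m^j \subseteq Z_m^0$. Recalling from Notation \ref{about Jet polynomials} that $I_m^0 = L_{322} + \langle f^{(0)},\dots,f^{(m)} \rangle$, I would check separately that each of the two blocks of generators lies in $I_m^{i,j}$.

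For the coordinate block $L_{322}$, this is precisely the content of Lemma \ref{the corrdinate element of I_m^i,j}, which gives $L_{322} \subseteq I_m^{i,j}$ for $m \geq 5$ and $i \neq j$. For the block $f^{(0)},\dots,f^{(m)}$, I would observe that each $f^{(l)}$ belongs to $J_m^i = L^i + \langle f^{(0)},\dots,f^{(m)} \rangle$ by construction; since $f^{(l)} \in J_m^i \subseteq R_m$ and $f^{(l)} \in J_m^i \subseteq J_m^i (R_m)_{y_1}$, we obtain $f^{(l)} \in J_m^i (R_m)_{y_1} \cap R_m = I_m^i$ by Definition \ref{definition of Z_m^i}. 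Hence $f^{(l)} \in I_m^i \subseteq I_m^i + I_m^j \subseteq \sqrt{I_m^i + I_m^j} = I_m^{i,j}$.

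Combining the two blocks gives $I_m^0 \subseteq I_m^{i,j}$, and passing to $\mathbf{V}(-)$ completes the argument. There is essentially no obstacle remaining at this stage: the genuine work, namely the verification that $y_1, z_1, x_2$ (together with the trivial memberships $x_0, x_1, y_0, z_0 \in L^i \subseteq I_m^i$) lie in $I_m^{i,j}$, has already been carried out in Lemma \ref{the corrdinate element of I_m^i,j}. The only point meriting a word of care is the passage from the ideal inclusion to the variety inclusion, which uses that $\mathbf{V}(I_m^{i,j}) = \mathbf{V}(I_m^i + I_m^j)$; this holds because $I_m^{i,j}$ is by definition the radical of $I_m^i + I_m^j$, and an ideal and its radical define the same closed set.
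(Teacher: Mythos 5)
Your proposal is correct and follows essentially the same route as the paper: both reduce the variety inclusion to the ideal inclusion $I_m^0 = L_{322} + \langle f^{(0)},\dots,f^{(m)} \rangle \subseteq I_m^{i,j}$ and invoke Lemma \ref{the corrdinate element of I_m^i,j} for the coordinate block. You merely spell out a step the paper leaves implicit, namely that $f^{(l)} \in J_m^i \subseteq J_m^i (R_m)_{y_1} \cap R_m = I_m^i$, which is a correct and welcome clarification rather than a different argument.
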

	\begin{proof}
From Lemma \ref{the corrdinate element of I_m^i,j},
we have $I_m^{i,j} \supseteq L_{322}$ for $i,j \in \{1,2,3\}$ with $i \neq j$.
Hence
	\begin{center}
$I_m^0 = L_{322} + \langle f^{(0)},..., f^{(m)} \rangle = \langle x_0,x_1,x_2,y_0,y_1,z_0,z_1 \rangle + \langle f^{(0)},..., f^{(m)} \rangle \subseteq I_m^{i,j}$.
	\end{center}
	\end{proof}
	\begin{Lemma} \label{maximal intersection}
For $m \geq 5$ and $i, j \in \{1,2,3\}$ with $i \neq j$,
we have $Z_m^0 \cap Z_m^i \supsetneq Z_m^0 \cap Z_m^i \cap Z_m^j$.
	\end{Lemma}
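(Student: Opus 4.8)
The plan is to reduce the strict inclusion to a single explicit point that separates $Z_m^0\cap Z_m^i$ from $Z_m^j$, and then to propagate this across all pairs $(i,j)$ using the symmetries of $X$. First I would record that, by Lemma~\ref{relation 0 component}, $Z_m^i\cap Z_m^j\subseteq Z_m^0$, so $Z_m^0\cap Z_m^i\cap Z_m^j=Z_m^i\cap Z_m^j$ and the claim is equivalent to $Z_m^0\cap Z_m^i\not\subseteq Z_m^j$. By Proposition~\ref{trancefar irreducible components} the automorphisms $\psi_1,\psi_2$ fix $Z_m^0$ and induce on $\{Z_m^1,Z_m^2,Z_m^3\}$ the transposition $(2\,3)$ and the cycle $(1\,2\,3)$, which generate the full symmetric group $S_3$; its action on ordered pairs of distinct indices is transitive. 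Since an automorphism preserves intersections and strict inclusions, it therefore suffices to treat the single pair $(i,j)=(1,2)$, the general case following by applying a suitable $\psi$ with $\psi(Z_m^1)=Z_m^i$ and $\psi(Z_m^2)=Z_m^j$.

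For $(i,j)=(1,2)$ I would use the distinguished element $g_2$. By Lemma~\ref{important elements} we have $g_2\in I_m^2$, so $g_2$ vanishes identically on $Z_m^2=\mathbf{V}(I_m^2)$. Hence it is enough to exhibit a point $P\in Z_m^0\cap Z_m^1$ with $g_2(P)\neq 0$, for then $P\in(Z_m^0\cap Z_m^1)\setminus Z_m^2$ and the inclusion is strict. The natural candidate is the $m$-jet
\[
P=(x,y,z)=(0,\;t^2,\;0),
\]
whose only nonzero coordinate is $y_2=1$. Substitution gives $f(P)=0$ and $x_0=x_1=x_2=y_0=y_1=z_0=z_1=0$, so $P\in\mathbf{V}\!\left(L_{322}+\langle f^{(0)},\dots,f^{(m)}\rangle\right)=Z_m^0$; and since every monomial of $g_2$ other than $-y_2^4$ involves some $x_\bullet$ or $z_\bullet$, we get $g_2(P)=-y_2^4=-1\neq 0$.

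The one genuinely delicate point is the membership $P\in Z_m^1$, because $Z_m^1=\overline{\mathbf{V}(J_m^1)\cap\mathbf{D}(y_1)}$ is a Zariski closure rather than $\mathbf{V}(J_m^1)$ itself, while $P$ sits on the boundary locus $y_1=0$. I would realize $P$ as a limit inside the open stratum by means of the family
\[
\gamma_s=(0,\;s\,t+t^2,\;0)\qquad(s\in\mathbb{A}^1).
\]
Along $\gamma_s$ one has $x\equiv z\equiv 0$, hence $f(\gamma_s)=0$, and the coordinates $x_0,x_1,y_0,z_0,z_1$ all vanish, so $\gamma_s\in\mathbf{V}(J_m^1)$ for every $s$; moreover $y_1(\gamma_s)=s\neq 0$ for $s\neq 0$, whence $\gamma_s\in\mathbf{V}(J_m^1)\cap\mathbf{D}(y_1)\subseteq Z_m^1$. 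As $s\mapsto\gamma_s$ is a morphism $\mathbb{A}^1\to X_m$ with irreducible source and $Z_m^1$ is closed, the entire line lies in $Z_m^1$; in particular $P=\gamma_0\in Z_m^1$. This settles the base case, and transporting $Z_m^0\cap Z_m^1\supsetneq Z_m^0\cap Z_m^1\cap Z_m^2$ by the appropriate word in $\psi_1,\psi_2$ yields the assertion for all $i\neq j$. I expect the closure issue for $Z_m^1$ to be the main obstacle: once the degenerating family $\gamma_s$ is produced, what remains is a short verification together with the symmetry reduction.
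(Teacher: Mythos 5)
Your proof is correct, and it is genuinely leaner than the paper's. The paper splits the argument into two cases: for $m \geq 6$ it proves the radical-membership statements $y_2 \in \sqrt{I_m^0 + I_m^1 + I_m^2}$ (via a chain of computations using $f^{(6)}$ --- the source of the restriction $m \geq 6$ --- together with both $g_1 \in I_m^1$ and $g_2 \in I_m^2$) and $y_2 \notin \sqrt{I_m^0 + I_m^1}$, the latter certified by exactly your point $\mathrm{P} = (0, t^2, 0)$ and your degenerating family $(0, st + t^2, 0)$; for $m = 5$ it switches to a different witness $\mathrm{Q} = (-t^3, -t^2, t^2)$ and evaluates the reduction of $g_2$ modulo $L_{322}$ at $\mathrm{Q}$. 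You instead observe that since $Z_m^2 = \mathbf{V}(I_m^2)$ (Lemma \ref{Zariski closure}) and $g_2 \in I_m^2$ for all $m \geq 5$ (Lemma \ref{important elements}), the single evaluation $g_2(\mathrm{P}) = -y_2^4 = -1 \neq 0$ already separates $Z_m^0 \cap Z_m^1$ from $Z_m^2$, uniformly in $m \geq 5$: this merges the two cases, renders $g_1$ and the $f^{(6)}$-computation unnecessary for this lemma, and replaces radical membership by a point evaluation. (The only thing the paper's longer case-(b) argument buys is the explicit elements $z_2, x_3 \in \sqrt{I_m^0 + I_m^1}$ and $y_2 \in \sqrt{I_m^0+I_m^1+I_m^2}$, which are not used elsewhere.) Your treatment of the one delicate point --- that $\mathrm{P}$ lies on $y_1 = 0$, so membership in $Z_m^1 = \overline{\mathbf{V}(J_m^1) \cap \mathbf{D}(y_1)}$ must be obtained as a limit along $\gamma_s \in \mathbf{V}(J_m^1) \cap \mathbf{D}(y_1)$, $s \neq 0$ --- coincides with the paper's, and your symmetry reduction is a more explicit version of the paper's appeal to Proposition \ref{trancefar irreducible components}: you correctly note that $\psi_1, \psi_2$ induce the transposition $(2\,3)$ and the cycle $(1\,2\,3)$, which generate a group acting transitively on ordered pairs $(i,j)$ while fixing $Z_m^0$, so the base case $(1,2)$ suffices. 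All verifications check out ($f(0, st+t^2, 0) = 0$ identically, every monomial of $g_2$ other than $-y_2^4$ involves an $x$- or $z$-variable), so I see no gap.
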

	\begin{proof}
By Proposition \ref{trancefar irreducible components},
we have only to show that $Z_m^0 \cap Z_m^1  \supsetneq Z_m^0 \cap Z_m^1 \cap Z_m^2$.
In other words,
$\sqrt{I_m^0 + I_m^1} \subsetneq \sqrt{I_m^0 + I_m^1 + I_m^2}$.
The proof is divided into two cases,
(a) $m = 5$ and (b) $m \geq 6$.

The case (b).
We prove $y_2 \in \sqrt{I_m^0 + I_m^1 + I_m^2}$ and $y_2 \notin \sqrt{I_m^0 + I_m^1}$.
First we prove $y_2 \in \sqrt{I_m^0 + I_m^1 + I_m^2}$.
From Lemma \ref{important elements},
$g_1 \in I_m^1 \subseteq \sqrt{I_m^0 + I_m^1 + I_m^2}$.
We consider the following two elements modulo $L_{322}$:
	\begin{alignat}{4}
f^{(6)} \equiv&\  - y_2^2z_2 &&\ + z_2^3 + x_3^2\\
g_1 \equiv&\  -4y_2^2z_2^2 &&\ + 4z_2x_3^2.
	\end{alignat}
Then we have
	\begin{center}
$4z_2^4 \equiv 4z_2f^{(6)} - g_1$,
	\end{center}
and the right hand side belongs to $I_m^1$.
Thus $4z_2^4 \in I_m^0 + I_m^1$,
and $z_2 \in \sqrt{I_m^0 + I_m^1}$.
From
	\begin{center}
$f^{(6)} \equiv x_3^2 \ \mathrm{mod}\  \langle L_{322}, z_2 \rangle = L_{323}$,
	\end{center}
we have $x_3 \in \sqrt{I_m^0 + I_m^1}$.
Now from $g_2 \in I_m^2$ and
	\begin{center}
$g_2 \equiv -y_2^4 \ \mathrm{mod}\  \langle L_{323}, x_3 \rangle$,
	\end{center}
it follows that $-y_2^4 \in \sqrt{I_m^0 + I_m^1} + I_m^2$,
hence that $y_2 \in \sqrt{I_m^0 + I_m^1 + I_m^2}$.

Next we prove $y_2 \notin \sqrt{I_m^0 + I_m^1}$.
We consider the point
	\begin{center}
$\mathrm{P'} = (\mathbf{\alpha}, \mathbf{\beta}, \mathbf{\gamma}) = (0, st + t^2, 0)$
	\end{center}
for any $s \in k - \{0\}$.
This point belongs to $\mathbf{V}(J_m^1) \cap \mathbf{D}(y_1)$ from the description of generators of $J_m^1$ in $\CC[x_0,...,x_m,y_0,...,y_m,z_0,...,z_m]$ in Notation \ref{about Jet polynomials} \eqref{OpenPrimeIdealV_m^1}.
When $s$ goes to $0$,
$\mathrm{P'}$ becomes $\mathrm{P} = (0, t^2, 0)$ and this point belongs to $\overline{\mathbf{V}(J_m^1) \cap \mathbf{D}(y_1)} = Z_m^1$.
Moreover $\mathrm{P} \in Z_m^0$,
since $Z_m^0 = \mathbf{V}(I_m^0)$ (Notation \ref{about Jet polynomials} \eqref{PrimeIdealV_m^0}).
Hence $\mathrm{P} \in Z_m^0 \cap Z_m^1$.
Since $y_2 = 1$ at $\mathrm{P}$,
$y_2 \notin \mathbf{I}(Z_m^0 \cap Z_m^1) = \sqrt{I_m^0 + I_m^1}$.

The case (a).
We prove $\mathrm{Q} = (-t^3, -t^2 ,t^2) \in Z_5^0 \cap Z_5^1$,
but $\mathrm{Q} \notin Z_5^0 \cap Z_5^1 \cap Z_5^2$.
First we prove $\mathrm{Q} \in Z_5^0 \cap Z_5^1$.
We can easily check $\mathrm{ord}_{Q}(f) > 5$ and $Q \in \mathbf{V}(L_{322})$,
and we have $\mathrm{Q} \in \mathbf{V}(I_5^0) = Z_5^0$.
To show that $\mathrm{Q} \in Z_5^1$,
we consider $\mathrm{Q'} = (st^2 - t^3, st - t^2, t^2)$ where $s \in k-\{0\}$.
Then $\mathrm{Q'}$ belongs to $\mathbf{V}(J_5^1) \cap \mathbf{D}(y_1)$,
and taking the limit $s \rightarrow 0$,
we get $\mathrm{Q} \in Z_5^1$.
Thus $\mathrm{Q} \in Z_5^0 \cap Z_5^1$.

Next we prove $\mathrm{Q} \notin Z_5^0 \cap Z_5^1 \cap Z_5^2$.
By Lemma \ref{important elements},
$g_2 \in \sqrt{I_5^0 + I_5^1 + I_5^2}$ and $I_5^0 \supseteq L_{322} = \langle x_0, x_1, x_2, y_0, y_1, z_0, z_1 \rangle$,
we have
	\begin{center}
$g_2 \equiv -y_2^4 - 4y_2^3z_2 + 2y_2^2z_2^2 + 12y_2z_2^3 - 9z_2^4 + 8x_3^2y_2 - 8x_3^2z_2\  \mathrm{mod}\  L_{322}$,
	\end{center}
and the right hand side belongs to $\sqrt{I_5^0 + I_5^1 + I_5^2}$.
We set
	\begin{center}
$h := -y_2^4 - 4y_2^3z_2 + 2y_2^2z_2^2 + 12y_2z_2^3 - 9z_2^4 + 8x_3^2y_2 - 8x_3^2z_2$.
	\end{center}
At the point $\mathrm{Q}$,
we have $x_3 = y_2 = -1$ and $z_2 = 1$,
and
	\begin{center}
$h = -(-1)^4 - 4\times(-1)^3\times1 + 2\times(-1)^2\times1^2 + 12\times(-1)\times1^3 - 9\times1^4 + 8\times(-1)^2\times(-1) - 8\times(-1)^2\times 1$\\
$= -32 \neq 0$.
	\end{center}
Hence $\mathrm{Q} \notin Z_5^0 \cap Z_5^1 \cap Z_5^2$.
	\end{proof}
	\begin{Thm} \label{maain result in section4}
Let $X \subset \CC^3$ be the surface defined by $f(x,y,z) = x^2 - y^2z + z^3$,
$X_m^0$ the singular fiber of the $m$-th jet scheme $X_m$ with $m \geq 5$ and $Z_m^0, ... , Z_m^3$ its irreducible components defined in Definition \ref{definition of irreducible components of X_m^0 for D_4-type singular surface}.
Then the maximal elements in $\{ Z_m^i \cap Z_m^j | i \neq j\ (i,j \in \{0,1,2,3\}) \}$ with respect to the inclusion relation are $Z_m^0 \cap Z_m^1$,
$Z_m^0 \cap Z_m^2$ and $Z_m^0 \cap Z_m^3$ and they are pairwise distinct.
	\end{Thm}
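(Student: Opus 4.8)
The plan is to reduce the statement to a short argument about the inclusion order on the finite collection of pairwise intersections, feeding in the two preceding lemmas as the essential input. First I would list the six members of $\{ Z_m^i \cap Z_m^j \mid i \neq j \}$: the three ``type-zero'' intersections $Z_m^0 \cap Z_m^1$, $Z_m^0 \cap Z_m^2$, $Z_m^0 \cap Z_m^3$, and the three ``mixed'' intersections $Z_m^1 \cap Z_m^2$, $Z_m^1 \cap Z_m^3$, $Z_m^2 \cap Z_m^3$. The goal is to show that the type-zero intersections are precisely the maximal elements and that they are pairwise distinct.

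Next I would eliminate the mixed intersections. By Lemma \ref{relation 0 component}, for $i,j \in \{1,2,3\}$ with $i \neq j$ one has $Z_m^i \cap Z_m^j \subseteq Z_m^0$, and hence $Z_m^i \cap Z_m^j = Z_m^0 \cap Z_m^i \cap Z_m^j$. Combining this with Lemma \ref{maximal intersection} yields
\[
Z_m^i \cap Z_m^j \;=\; Z_m^0 \cap Z_m^i \cap Z_m^j \;\subsetneq\; Z_m^0 \cap Z_m^i .
\]
Thus each mixed intersection is strictly smaller than a type-zero one, so no mixed intersection is maximal, and every maximal element must be of type zero.

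Finally I would check that the three type-zero intersections are pairwise incomparable, which gives both maximality and distinctness at once. If $Z_m^0 \cap Z_m^i \subseteq Z_m^0 \cap Z_m^j$ held for some $i \neq j$ in $\{1,2,3\}$, then $Z_m^0 \cap Z_m^i \subseteq Z_m^j$, so $Z_m^0 \cap Z_m^i = Z_m^0 \cap Z_m^i \cap Z_m^j$, contradicting Lemma \ref{maximal intersection}. Hence no type-zero intersection lies inside another; together with the strict containments of the previous step, this shows each $Z_m^0 \cap Z_m^k$ is maximal, and the incomparability forces the three to be pairwise distinct.

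Granting the two lemmas, the theorem is therefore a brief order-theoretic deduction. The genuine difficulty has already been absorbed into Lemma \ref{relation 0 component} and especially Lemma \ref{maximal intersection}, whose proofs rely on exhibiting the explicit elements $g_1 \in I_m^1$ and $g_2 \in I_m^2$ and on producing points (or one-parameter limits) that separate $Z_m^0 \cap Z_m^i$ from $Z_m^0 \cap Z_m^i \cap Z_m^j$. The symmetry Proposition \ref{trancefar irreducible components} enters only inside those lemmas, where it cuts the number of pairs to be verified down to a single representative.
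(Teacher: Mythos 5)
Your proposal is correct and takes essentially the same route as the paper's own proof: both use Lemma \ref{relation 0 component} together with Lemma \ref{maximal intersection} to show each mixed intersection satisfies $Z_m^i \cap Z_m^j = Z_m^0 \cap Z_m^i \cap Z_m^j \subsetneq Z_m^0 \cap Z_m^i$, and both rule out $Z_m^0 \cap Z_m^i \subseteq Z_m^0 \cap Z_m^j$ by noting it would force $Z_m^0 \cap Z_m^i = Z_m^0 \cap Z_m^i \cap Z_m^j$, contradicting Lemma \ref{maximal intersection}. Your transitivity remark (strict containment in a mixed intersection would yield strict containment in a type-zero one, contradicting incomparability) disposes of the case $Z_m^0 \cap Z_m^i \subseteq Z_m^l \cap Z_m^j$ at least as cleanly as the paper's explicit treatment of that case.
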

	\begin{proof}
By Lemma \ref{relation 0 component} and Lemma \ref{maximal intersection},
for any $i, j \in \{1,2,3\}$ with $i \neq j$,
	\begin{center}
$Z_m^i \cap Z_m^j = Z_m^i \cap Z_m^j \cap Z_m^0 \subsetneq Z_m^i \cap Z_m^0$.
	\end{center}
Hence $Z_m^i \cap Z_m^j$ are not maximal with respect to the inclusion relation.
We show that $Z_m^0 \cap Z_m^i$ is maximal for $i \in \{1,2,3\}$.
If $Z_m^0 \cap Z_m^i \subseteq Z_m^0 \cap Z_m^j$ for $j \in \{1,2,3\}$ and $j \neq i$,
then $Z_m^0 \cap Z_m^i \cap Z_m^j = (Z_m^0 \cap Z_m^i) \cap (Z_m^0 \cap Z_m^j) = Z_m^0 \cap Z_m^i$.
This is a contradiction to Lemma \ref{maximal intersection},
so $Z_m^0 \cap Z_m^i \not\subseteq Z_m^0 \cap Z_m^j$.
Moreover,
if $Z_m^0 \cap Z_m^i \subseteq Z_m^l \cap Z_m^j$ for $j,l \in \{1,2,3\} - \{ i \}$ and $j \neq l$,
then $Z_m^i \subseteq Z_m^0$ since $Z_m^l \cap Z_m^j \subseteq Z_m^0$ by Lemma \ref{relation 0 component}.
This is a contradiction to Proposition \ref{Irreducible decomposition of singular fiber of D_4-type singular point},
so $Z_m^0 \cap Z_m^i \not\subseteq Z_m^l \cap Z_m^j$ for $l \neq i, j$.
Hence $Z_m^0 \cap Z_m^i$ are maximal with respect to inclusion relation and are pairwise distinct for $i = 1,2,3$.
	\end{proof}
	\begin{Coro}
The graph obtained by Construction \ref{ConstructionofGraph} for $m \geq 5$ is the resolution graph of a $D_4$-type singularity.
	\end{Coro}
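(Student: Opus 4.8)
The plan is to treat the theorem as a purely order-theoretic consequence of the two preceding lemmas, Lemma \ref{relation 0 component} and Lemma \ref{maximal intersection}, so that no new geometry of the jet scheme is needed once those are in hand. There are exactly $\binom{4}{2}=6$ pairwise intersections among $Z_m^0,\dots,Z_m^3$, which I would split into the three \emph{mixed} intersections $Z_m^0\cap Z_m^i$ ($i\in\{1,2,3\}$) and the three \emph{internal} intersections $Z_m^i\cap Z_m^j$ ($i,j\in\{1,2,3\}$, $i\neq j$). The objective is to show that the mixed intersections are exactly the maximal elements of this poset and are pairwise incomparable, while each internal intersection is strictly dominated.

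First I would dispose of the internal intersections. For $i,j\in\{1,2,3\}$ with $i\neq j$, Lemma \ref{relation 0 component} gives $Z_m^i\cap Z_m^j\subseteq Z_m^0$, whence $Z_m^i\cap Z_m^j=Z_m^0\cap Z_m^i\cap Z_m^j$; Lemma \ref{maximal intersection} then yields $Z_m^0\cap Z_m^i\cap Z_m^j\subsetneq Z_m^0\cap Z_m^i$. Thus every internal intersection is strictly contained in a mixed one and so cannot be maximal.

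Next I would verify that each mixed intersection $Z_m^0\cap Z_m^i$ is maximal, i.e. not strictly contained in any other pairwise intersection. Suppose $Z_m^0\cap Z_m^i\subseteq Y$ for a pairwise intersection $Y$. If $Y=Z_m^0\cap Z_m^j$ with $j\neq i$, intersecting both sides with $Z_m^0\cap Z_m^i$ forces $Z_m^0\cap Z_m^i=Z_m^0\cap Z_m^i\cap Z_m^j$, contradicting Lemma \ref{maximal intersection}; the same computation, applied to the opposite inclusion, shows the three mixed intersections are pairwise distinct. If instead $Y=Z_m^l\cap Z_m^j$ is internal, then intersecting $Z_m^0\cap Z_m^i\subseteq Z_m^l\cap Z_m^j$ with $Z_m^i$ and discarding a factor yields $Z_m^0\cap Z_m^i\subseteq Z_m^i\cap Z_m^j$ for some $j\neq i$; since $Z_m^i\cap Z_m^j\subseteq Z_m^0$ by Lemma \ref{relation 0 component}, this sandwiches $Z_m^0\cap Z_m^i=Z_m^i\cap Z_m^j=Z_m^0\cap Z_m^i\cap Z_m^j$, again forbidden by Lemma \ref{maximal intersection} (alternatively, one argues that such a containment would force $Z_m^i\subseteq Z_m^0$, contradicting the irreducible decomposition of Proposition \ref{Irreducible decomposition of singular fiber of D_4-type singular point}). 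Combining the two steps, the maximal elements are precisely $Z_m^0\cap Z_m^1$, $Z_m^0\cap Z_m^2$, $Z_m^0\cap Z_m^3$.

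I expect no real obstacle in the theorem itself: once Lemma \ref{relation 0 component} and Lemma \ref{maximal intersection} are granted, the argument is formal, and the symmetry recorded in Proposition \ref{trancefar irreducible components} lets me check a single representative pair (say $i=1$, $j=2$) and transport the conclusion to the rest by the automorphisms $\psi_1,\psi_2$. The genuine difficulty is concentrated in Lemma \ref{maximal intersection}, whose proof rests on the explicit elements $g_1\in I_m^1$ and $g_2\in I_m^2$ together with a careful case split between $m=5$ and $m\geq 6$; that is where the substantive work lies, and the present theorem merely harvests it.
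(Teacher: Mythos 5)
Your proposal is correct and takes essentially the paper's route: the paper proves this corollary by citing Theorem \ref{maain result in section4}, and your order-theoretic derivation of $E=\{Z_m^0\cap Z_m^1,\,Z_m^0\cap Z_m^2,\,Z_m^0\cap Z_m^3\}$ from Lemmas \ref{relation 0 component} and \ref{maximal intersection} is essentially the paper's own proof of that theorem, inlined (your sandwich argument $Z_m^0\cap Z_m^i\subseteq Z_m^i\cap Z_m^j\subseteq Z_m^0$ is in fact a cleaner justification of the case $Z_m^0\cap Z_m^i\subseteq Z_m^l\cap Z_m^j$ than the paper's appeal to $Z_m^i\subseteq Z_m^0$). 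The only step left implicit is the trivial closing observation that the resulting graph is the star with center $Z_m^0$ and leaves $Z_m^1,Z_m^2,Z_m^3$, which is the $D_4$ resolution graph.
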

	\begin{proof}
The set $E$ is $\{Z_m^0 \cap Z_m^1, Z_m^0 \cap Z_m^2, Z_m^0 \cap Z_m^3 \}$,
so the graph obtained by Construction \ref{ConstructionofGraph} is as follows:
	\[
		\xymatrix{
			 &  & Z_m^2\\
			Z_m^1 \ar@{-}[r] & Z_m^0 \ar@{-}[ru] \ar@{-}[rd] & \\
			 & & Z_m^3.
		}
	\]
	\end{proof}

\end{document}